\newtheorem{thm}{Theorem}
\newtheorem{defn}[thm]{Definition}
\newtheorem{lem}[thm]{Lemma}
\newtheorem{cor}[thm]{Corollary}
\newtheorem{exam}[thm]{Example}
\newtheorem{prop}[thm]{Proposition}
\newtheorem{rem}[thm]{Remark}
\DeclareMathOperator{\fix}{Fix}
\begin{document}

\centerline {\Large\textbf{On metric-preserving functions and fixed point theorems}}
\vskip.8cm \centerline {\textbf{Prapanpong Pongsriiam}$^a$\footnote{ Corresponding author} and \textbf{Imchit Termwuttipong}$^b$}

\vskip.5cm
% Address 1 -------------------------------------------------------------------
\centerline {$^a$Department of Mathematics, Faculty of Science}
\centerline {Silpakorn University, Nakhon Pathom, 73000, Thailand}
\centerline {{\tt prapanpong@gmail.com}}
\centerline {$^b$Department of Mathematics and Computer Science, Faculty of Science}
\centerline {Chulalongkorn University, Bangkok, 10330, Thailand}
\centerline {{\tt  Imchit.T@chula.ac.th}}

\begin{abstract}
Kirk and Shahzad have recently given fixed point theorems concerning local radial contractions and metric transforms. In this article, we replace the metric transforms by metric-preserving functions. This in turn gives several extensions of the main results given by Kirk and Shahzad. Several examples are given. The fixed point sets of metric transforms and metric-preserving functions are also investigated.  
\end{abstract}
\noindent \textbf{Keywords:} metric-preserving function; metric transform; local radial contraction; rectifiably pathwise connected space; uniform local multivalued contraction 
\section{Introduction}
\indent The concept of metric transforms is introduced by L. M. Blumenthal \cite{B1, B2} in 1936 while the concept of metric-preserving functions seems to be introduced by W. A. Wilson \cite{Wi} in 1935 and is investigated in details by many authors \cite{BD, BD1, C, Da, Do2, Do, Do1, DP, DP1, DP2, PRS, PV, Po, Po1, Sr, TO, V}. Recently, Petru\c{s}el, Rus, and \c{S}erban \cite{PRS} have shown the role of metric-preserving functions in fixed point theory. In addition, Kirk and Shahzad \cite{KS} have given results concerning metric transforms and fixed point theorems. Their main results are as follows:
\begin{thm}\label{thmA}
(Kirk and Shahzad \cite[Theorem 2.2]{KS}) Let $(X,d)$ be a metric space and $g:X\to X$. Suppose there exists a metric transform $\phi$ on $X$ and a number $k\in(0,1)$ such that the following conditions hold:
\begin{itemize}
\item[(a)] For each $x\in X$ there exists $\varepsilon_x > 0$ such that for every $u\in X$
$$
d(x,u)<\varepsilon \Rightarrow (\phi\circ d)(g(x),g(u)) \leq kd(x,u).
$$
\item[(b)] There exists $c\in (0,1)$ such that for all $t>0$ sufficiently small 
$$
kt\leq \phi(ct).
$$
\end{itemize}
Then $g$ is a local radial contraction on $(X,d)$.
\end{thm}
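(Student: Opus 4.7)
The plan is to show that the constant $c \in (0,1)$ from condition (b) itself serves as the local radial contraction ratio. The strategy is to sandwich $\phi(d(g(x),g(u)))$ between two expressions of the form $\phi(\cdot)$ and then peel off $\phi$ using its monotonicity, which is a standard feature of a metric transform.

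First, fix $x \in X$. Let $\varepsilon_x > 0$ be the radius supplied by condition (a), and let $\delta > 0$ be a threshold such that the inequality $kt \leq \phi(ct)$ of condition (b) holds for all $t \in (0,\delta)$. Set $\varepsilon_x' := \min\{\varepsilon_x, \delta\}$. I will show that for every $u \in X$ with $d(x,u) < \varepsilon_x'$ one has $d(g(x),g(u)) \leq c\, d(x,u)$.

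For such a $u$, the case $u = x$ is trivial. Otherwise, write $t := d(x,u) \in (0, \varepsilon_x')$. Condition (a) yields $\phi\bigl(d(g(x),g(u))\bigr) \leq kt$, while condition (b) yields $kt \leq \phi(ct)$. Chaining these gives
\[
\phi\bigl(d(g(x),g(u))\bigr) \;\leq\; \phi(ct).
\]
Since a metric transform is strictly increasing on $[0,\infty)$, this inequality can be inverted to give $d(g(x),g(u)) \leq ct = c\, d(x,u)$. Because $c \in (0,1)$ does not depend on $x$, and $\varepsilon_x'$ is a positive radius at $x$, this is exactly the definition of a local radial contraction.

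The only real step requiring care is peeling off $\phi$ at the end, which depends on $\phi$ being strictly increasing rather than merely nondecreasing; if $\phi$ were only nondecreasing one would get equality throughout the sandwich and no immediate conclusion about $d(g(x),g(u))$ itself. However, in the definition of metric transform in use here, strict monotonicity holds, so this is the natural and essentially only substantive step in the argument.
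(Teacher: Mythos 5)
Your proof is correct, but it follows a genuinely different route from the one the paper takes. You argue directly: sandwich $\phi\bigl(d(g(x),g(u))\bigr)\leq k\,d(x,u)\leq\phi\bigl(c\,d(x,u)\bigr)$ and invert $\phi$ using its strict monotonicity, which is essentially Kirk and Shahzad's original argument and immediately yields $c$ itself as the contraction constant. The paper instead obtains Theorem \ref{thmA} as a corollary of its generalization: Proposition \ref{prop2.3} (every metric transform is metric-preserving) combined with Theorem \ref{thm2.5}, whose proof reduces to Theorem \ref{thm2.1} by showing $f'(0)>k$. That longer chain deliberately avoids the step you identify as the crux --- peeling off $\phi$ --- because a general metric-preserving function need not be monotone, let alone strictly increasing; in its place the paper uses the doubling inequality of Lemma \ref{lemma10} ($0\leq a\leq 2b$ implies $f(a)\leq 2f(b)$) to prove continuity of $g$ (Lemma \ref{lemma2.2}) and then controls the ratio $f(y)/y$ near $0$ via $f'(0)$. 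What your approach buys is brevity and the sharp constant $c$; what the paper's approach buys is that the same argument survives when the metric transform is replaced by an arbitrary metric-preserving function, which is the whole point of the article. Your proof is a valid and clean proof of Theorem \ref{thmA} as stated, but it would not generalize to Theorems \ref{thm2.1} and \ref{thm2.5}.
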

\begin{thm}\label{thmB}
(Kirk and Shahzad \cite[Theorem 2.3]{KS}) Suppose, in addition to the assumptions in Theorem \ref{thmA}, $X$ is complete and rectifiably pathwise connected. Then $g$ has a unique fixed point $x_0$, and $\lim_{n\to\infty}g^n(x) = x_0$ for each $x\in X$. 
\end{thm}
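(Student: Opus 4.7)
My plan is to combine Theorem~\ref{thmA} with the classical principle that a (uniform) local radial contraction on a complete, rectifiably pathwise connected metric space has a unique fixed point to which every Picard iterate converges. The first step is simply to invoke Theorem~\ref{thmA} to conclude that $g$ is a local radial contraction on $(X,d)$ with uniform constant $k$: for each $x\in X$ there is $\varepsilon_x>0$ such that $d(g(x),g(u))\leq k\,d(x,u)$ whenever $d(x,u)<\varepsilon_x$.

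Next, I would use the rectifiable pathwise connectedness by introducing the intrinsic length metric $\rho(x,y)=\inf\{L(\gamma):\gamma\text{ is a rectifiable path from }x\text{ to }y\}$. The key lemma to prove is that for every rectifiable path $\gamma$ from $x$ to $y$,
$$d(g(x),g(y))\leq k\,L(\gamma),$$
so that taking infima yields $d(g(x),g(y))\leq k\,\rho(x,y)$. The argument is a standard subdivision: cover the (compact) image of $\gamma$ by finitely many balls $B(\gamma(t_i),\varepsilon_{\gamma(t_i)})$, partition the parameter interval finely enough (a Lebesgue number / uniform continuity argument) so that consecutive nodes $\gamma(s_j)$ satisfy $d(\gamma(s_j),\gamma(s_{j+1}))<\varepsilon_{\gamma(t_{i(j)})}$, apply the local contraction to each consecutive pair, and sum by the triangle inequality.

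Once this path-wise inequality is in hand, the fixed point conclusions follow by a standard Banach-type argument. Fixing $x\in X$ and any rectifiable path from $x$ to $g(x)$, iteration gives
$$d(g^{n}(x),g^{n+1}(x))\leq k^{n}\rho(x,g(x)),$$
so $\{g^{n}(x)\}$ is $d$-Cauchy and, by $d$-completeness, converges to some $x_0$. Continuity of $g$ (which is immediate from the local contraction property) forces $g(x_0)=x_0$. For uniqueness and global convergence, given any two points $x,y\in X$, choose a rectifiable path between them and iterate $d(g^{n}(x),g^{n}(y))\leq k^{n}\rho(x,y)\to 0$; applying this with $y$ a fixed point yields both uniqueness and $g^{n}(x)\to x_0$ for every $x$.

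The main obstacle is the subdivision step that promotes the pointwise local bound to a genuinely global path-wise bound. The neighborhoods $\varepsilon_x$ are a priori not uniform in $x$, so one must argue via compactness of $\gamma([0,1])$ together with lower semicontinuity (or at least a finite open cover) to obtain a single subdivision that works at every node. Everything else is bookkeeping on top of the Banach contraction principle applied to the inequality $d(g(x),g(y))\leq k\,\rho(x,y)$.
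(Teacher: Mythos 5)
Your overall strategy (Theorem~\ref{thmA} to get a local radial contraction, then a fixed point theorem for local radial contractions on complete rectifiably pathwise connected spaces) is exactly the intended one, but the paper does not re-prove the second half: it invokes the Hu--Kirk theorem (Lemma~\ref{lemma13}) as a black box, so its proof of this statement is a two-line citation. You instead attempt to re-derive Hu--Kirk, and your subdivision step contains the precise gap that made that theorem historically delicate. The hypothesis is \emph{radial}: for each $x$ there is $\varepsilon_x$ such that $d(g(x),g(u))\leq k\,d(x,u)$ only for pairs in which $x$ itself is the designated center. Your plan covers $\gamma([a,b])$ by balls $B(\gamma(t_i),\varepsilon_{\gamma(t_i)})$ and then ``applies the local contraction to each consecutive pair'' $\gamma(s_j),\gamma(s_{j+1})$; but if neither node is a center, no contraction inequality is available for that pair. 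Routing through the center via the triangle inequality gives $d(g(\gamma(s_j)),g(\gamma(s_{j+1})))\leq k\bigl(d(\gamma(s_j),\gamma(t_{i(j)}))+d(\gamma(t_{i(j)}),\gamma(s_{j+1}))\bigr)$, and this sum is not dominated by the arc length of $\gamma$ on $[s_j,s_{j+1}]$ unless $t_{i(j)}$ lies in that parameter subinterval, which a Lebesgue-number argument does not guarantee. This is exactly the point the paper flags: the original Hu--Kirk proof relied on a false proposition of Holmes, and Jungck's repair is what makes the assertion true. The step is fixable --- take a minimal finite subcover by intervals $I_{t_i}$ with $\gamma(I_{t_i})\subseteq B(\gamma(t_i),\varepsilon_{\gamma(t_i)})$, order the centers, and build the partition out of the centers $t_i$ together with points $u_i$ chosen in the overlaps $I_{t_i}\cap I_{t_{i+1}}$ with $t_i<u_i<t_{i+1}$, so that every consecutive pair of partition nodes has a center as one of its two members --- but as written the argument does not go through.

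A second, smaller issue: your Picard iteration $d(g^{n}(x),g^{n+1}(x))\leq k^{n}\rho(x,g(x))$ needs $g$ to be a $k$-contraction for the length metric $\rho$, i.e.\ $\rho(g(x),g(y))\leq k\rho(x,y)$, not merely the endpoint bound $d(g(x),g(y))\leq k L(\gamma)$ that your key lemma states. For this you should prove the stronger statement $L(g\circ\gamma)\leq k\,L(\gamma)$ (which the corrected subdivision argument yields by applying it to every partition of $\gamma$), so that $g\circ\gamma$ is an admissible competitor in the infimum defining $\rho(g(x),g(y))$. With those two repairs your self-contained route is sound; the trade-off against the paper's approach is that you prove Lemma~\ref{lemma13} rather than cite it.
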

Our purpose is to show that the metric transform $\phi$ in Theorem \ref{thmA} can be replaced by a metric-preserving function. This in turn gives extensions to the main results given by Kirk and Shahzad in \cite[Theorem 2.2, Theorem 2.3, Theorem 2.8, Theorem 3.4, and Theorem 3.6]{KS}. Now let us recall some basic definitions that will be used throughout this article.
\begin{defn}
Let $f:[0,\infty)\to [0,\infty)$. Then 
\begin{itemize}
\item[(i)] $f$ is said to be \emph{a metric transform} if $f(0) = 0$, $f$ is strictly increasing on $[0,\infty)$, and $f$ is concave on $[0,\infty)$,
\item[(ii)] $f$ is said to be \emph{a metric-preserving function} if for all metric spaces $(X,d)$, $f\circ d$ is a metric on $X$,
\item[(iii)] $f$ is said to be \emph{amenable} if $f^{-1}(\{0\}) = \{0\}$,
\item[(iv)] $f$ is said to be \emph{tightly bounded} if there exists $u>0$ such that $f(x)\in [u,2u]$ for all $x>0$,
\item[(v)]  $f$ is said to be \emph{subadditive} if $f(a+b)\leq f(a)+f(b)$ for all $a, b\in [0,\infty)$.
\end{itemize}
\end{defn}
\begin{defn}
Let $(X,d)$ be a metric space and $g:X\to X$. Then $g$ is said to be \emph{a local radial contraction} if there exists $k\in (0,1)$ such that for each $x\in X$, there exists $\varepsilon > 0$ such that for every $u\in X$,
$$
d(x,u)<\varepsilon \quad\Rightarrow\quad d(g(x),g(u)) \leq kd(x,u). 
$$
\end{defn}
%%%%%%%%%%%%%%%%%%%%%%%%%%%%%%%%%%%%%%%%%%%%%%%%%5
\begin{defn}
Let $(X,d)$ be a metric space and $\gamma$ be a path in $X$, that is, a continuous map $\gamma:[a,b]\to X$. A \emph{partition} $Y$ of $[a,b]$ is a finite collection of points $Y = \{y_0, \ldots, y_N\}$ such that $a = y_0\leq y_1\leq y_2\leq\cdots\leq y_N = b$. The supremum of the sums
$$
\sum Y = \sum_{i=1}^Nd(\gamma(y_{i-1}),\gamma(y_i))
$$
over all the partitions $Y$ of $[a,b]$ is called the \emph{length} of $\gamma$. A path is said to be \emph{rectifiable} if its length is finite. A metric space is said to be \emph{rectifiably pathwise connected} if each two points of $X$ can be joined by a rectifiable path. 
\end{defn}
We will give some auxiliary results in Section 2. Then we will give the results concerning metric-preserving functions, local radial contractions, and uniform local multivalued contractions in Section 3 and Section 4. Finally, we investigate the fixed point sets of metric transforms and metric-preserving functions in Section 5.
\section{Lemmas}
We need to use some properties of metric-preserving functions and some fixed point theorems. We give them in this section for the convenience of the reader. For more details of the metric-preserving functions, we refer the reader to \cite{C, Do2, Do1}.
\begin{lem}\label{lemma3}
Let $f:[0,\infty)\to[0,\infty)$. Then 
\begin{itemize}
\item[(i)] if $f$ is metric-preserving, then $f$ is amenable,
\item[(ii)] if $f$ is amenable and concave, then $f$ is metric-preserving.
\end{itemize}
\end{lem}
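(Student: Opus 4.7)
For part (i), my plan is to test the metric-preserving property against the simplest possible metric space, namely $\mathbb{R}$ with the usual absolute-value metric. From $(f\circ d)(x,x) = 0$ I obtain $f(0) = 0$, and for any $t > 0$, choosing $x, y \in \mathbb{R}$ with $|x - y| = t$ gives $x \neq y$, whence $f(t) = (f\circ d)(x,y) > 0$. Together these say $f^{-1}(\{0\}) = \{0\}$, i.e., $f$ is amenable.

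For part (ii), the plan is to verify the four metric axioms for $f \circ d$ on an arbitrary metric space $(X,d)$. Non-negativity and symmetry are immediate from the codomain of $f$ and the symmetry of $d$; amenability reduces the separation axiom $(f\circ d)(x,y) = 0 \Leftrightarrow x = y$ to the corresponding property of $d$. All the real content is in the triangle inequality, which I intend to derive by establishing two auxiliary properties of $f$---monotonicity and subadditivity---and then chaining them as
\[
f(d(x,z)) \leq f\bigl(d(x,y) + d(y,z)\bigr) \leq f(d(x,y)) + f(d(y,z)).
\]

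Subadditivity is a short standard calculation: using $f(0) = 0$, concavity gives $f(a) \geq \frac{a}{a+b} f(a+b)$ and $f(b) \geq \frac{b}{a+b} f(a+b)$ (obtained by writing $a$ and $b$ as convex combinations of $a+b$ and $0$), and summing yields $f(a+b) \leq f(a)+f(b)$. Monotonicity is the step I expect to demand the most care, since concavity alone does not force a function to be nondecreasing. Here my plan is a proof by contradiction: if $f(a) > f(b)$ for some $0 < a < b$, then writing $b$ as a convex combination of $a$ and an arbitrary $c > b$ and applying concavity gives an upper bound on $f(c)$ that tends to $-\infty$ as $c \to \infty$, contradicting $f \geq 0$. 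This monotonicity argument, which crucially combines concavity with the non-negativity of the codomain, is the main obstacle of the proof; once both (a) and (b) are in hand, the triangle inequality follows immediately and (ii) is complete.
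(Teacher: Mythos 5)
Your proof is correct. The paper itself does not prove this lemma but only cites the literature (Bors\'ik--Dobo\v{s} and Dobo\v{s}'s notes), and your argument is essentially the standard one found there: part (i) by testing against the usual metric on $\mathbb{R}$, and part (ii) by showing that an amenable concave function is nondecreasing and subadditive and then chaining these two facts through the triangle inequality of $d$ --- which is exactly the route via the paper's own Lemma \ref{chapter2lemma9}. Your contradiction argument for monotonicity (a concave function that ever strictly decreases is eventually negative) and the convex-combination derivation of subadditivity from $f(0)=0$ are both sound, so nothing is missing.
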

\begin{proof}
The proof of (i) is easily obtained, see for example, in \cite[Lemma 2.3]{BD1}. The proof of (ii) is given in \cite[Proposition 1.2]{BD1} and \cite[p.\ 13]{Do2}. See also \cite[Proposition 2]{BD} and \cite[p.\ 311]{C}.
\end{proof}
%%%%%%%%
%%%%%%%%%%%%
\begin{lem}\label{chapter2lemma9}
Let $f:[0,\infty)\to[0,\infty)$. If $f$ is amenable, subadditive, and increasing, then $f$ is metric-preserving.
\end{lem}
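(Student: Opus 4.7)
The goal is to verify that for an arbitrary metric space $(X,d)$ the composition $f\circ d$ satisfies the four defining properties of a metric. My plan is to check these one at a time, using each of the three hypotheses (amenability, subadditivity, monotonicity) exactly where it is needed.

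First, nonnegativity is immediate since $f$ takes values in $[0,\infty)$. For the identity of indiscernibles, I use amenability: $(f\circ d)(x,y)=0$ iff $f(d(x,y))=0$ iff $d(x,y)=0$ (since $f^{-1}(\{0\})=\{0\}$) iff $x=y$. Symmetry of $f\circ d$ follows from symmetry of $d$ without needing any property of $f$.

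The only step with any content is the triangle inequality, and this is where the remaining two hypotheses enter. Given $x,y,z\in X$, start from $d(x,z)\leq d(x,y)+d(y,z)$. Applying $f$ and using that $f$ is increasing gives
\[
f(d(x,z))\leq f\bigl(d(x,y)+d(y,z)\bigr),
\]
and then subadditivity of $f$ yields
\[
f\bigl(d(x,y)+d(y,z)\bigr)\leq f(d(x,y))+f(d(y,z)).
\]
Chaining these two inequalities delivers $(f\circ d)(x,z)\leq (f\circ d)(x,y)+(f\circ d)(y,z)$.

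There is really no hard step here; each hypothesis is used once and in an essentially forced way. The only subtlety worth flagging is to make sure amenability is invoked (not some stronger positivity assumption) for the identity-of-indiscernibles direction, since $f$ is allowed a priori to vanish somewhere on $(0,\infty)$ without the amenable hypothesis; the stated condition $f^{-1}(\{0\})=\{0\}$ rules this out and is exactly what the axiom needs.
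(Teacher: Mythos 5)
Your proof is correct and is exactly the standard direct verification found in the references the paper cites for this lemma (the paper itself only gives pointers to Bors\'ik--Dobo\v s, Corazza, and Dobo\v s rather than writing the argument out). Each hypothesis is used precisely where you say it is, and the chaining of monotonicity and subadditivity for the triangle inequality is the intended argument.
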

\begin{proof}
The proof can be found in \cite[Proposition 1.1]{BD1}, \cite[Proposition 2.3]{C}, and \cite[p.\ 9]{Do2}.
\end{proof}
%%%%%%%%%%%%%
%%%%%%%%%%%%%
\begin{lem}\label{lemma8}
If $f:[0,\infty)\to[0,\infty)$ is amenable and tightly bounded, then $f$ is metric-preserving.
\end{lem}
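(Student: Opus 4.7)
The plan is to verify directly that, for any metric space $(X,d)$, the function $\rho := f\circ d$ satisfies all four metric axioms on $X$. Three of them are essentially free: nonnegativity because $f$ takes values in $[0,\infty)$, symmetry because $d$ is symmetric, and the identity of indiscernibles $\rho(x,y)=0\iff x=y$ because $f$ is amenable, i.e.\ $f^{-1}(\{0\})=\{0\}$. So the content of the lemma is the triangle inequality $f(d(x,z))\leq f(d(x,y))+f(d(y,z))$.

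For that, I would fix the constant $u>0$ supplied by tight boundedness, so that $f(t)\in[u,2u]$ for every $t>0$, and split into cases on which of $d(x,z)$, $d(x,y)$, $d(y,z)$ vanish. If $x=z$ the inequality is trivial because the left side is $0$. Otherwise $x\neq z$, and I would separate two subcases: (a) both $d(x,y)>0$ and $d(y,z)>0$, in which case $f(d(x,y))\geq u$ and $f(d(y,z))\geq u$, so their sum is at least $2u$, which is itself an upper bound for $f(d(x,z))$; and (b) one of the two intermediate distances vanishes, say $d(x,y)=0$, forcing $x=y$ and therefore $d(y,z)=d(x,z)$, so the inequality reduces to the equality $f(d(x,z))=0+f(d(y,z))$. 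The remaining sub-subcase $d(y,z)=0$ is symmetric.

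I do not expect any genuine obstacle here; the only point worth flagging is that the constant $2$ appearing in the definition of tightly bounded is precisely calibrated to make subcase (a) work, since the ratio $\sup f / \inf f$ on $(0,\infty)$ is at most $2$. In particular, no monotonicity, continuity, or subadditivity of $f$ is required, so this result is genuinely independent of Lemma~\ref{chapter2lemma9} rather than a corollary of it.
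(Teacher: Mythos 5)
Your proof is correct and complete. The paper does not actually prove this lemma --- it only cites Bors\'ik--Dobo\v s, Corazza, and Dobo\v s's lecture notes --- and your direct verification, which reduces the triangle inequality to the observation that $f(d(x,z))\le 2u\le f(d(x,y))+f(d(y,z))$ whenever all three distances are positive and handles the degenerate cases via amenability, is precisely the standard argument given in those references; your closing remark that neither monotonicity nor subadditivity is needed is also accurate.
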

\begin{proof}
The proof is given in \cite[Proposition 1.3]{BD1}, \cite[Proposition 2.8]{C}, and \cite[p.\ 17]{Do2}.
\end{proof}
%%%%%%%%%%%%%%%%%%%%
%%%%%%%%%%%%%%%%5
\begin{lem}\label{lemma10}
If $f$ is metric-preserving and $0\leq a\leq 2b$, then $f(a) \leq 2f(b)$.
\end{lem}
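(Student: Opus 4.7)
The plan is to produce a metric space in which the triangle inequality for $f\circ d$ directly yields the desired bound, so that being metric-preserving can be invoked without appealing to any of the special structure theorems (subadditivity, concavity, tight boundedness) recalled above. The key observation is that the condition $a\le 2b$ is precisely what is needed for the three numbers $a,b,b$ to be realizable as the pairwise distances of a three-point metric space, with $a$ playing the role of the long side.

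First I would dispose of the degenerate cases. If $b=0$ then $a=0$ as well and, since $f$ is amenable by Lemma~\ref{lemma3}(i), both sides of the inequality are zero. If $a=0$ the inequality $0=f(0)\le 2f(b)$ is trivial. So I may assume $a,b>0$.

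Next I would set $X=\{x,y,z\}$ with three distinct points and define $d(x,y)=a$, $d(x,z)=d(y,z)=b$, extended symmetrically and with $d(p,p)=0$. The only nontrivial thing to check is the triangle inequality, and the three instances reduce to $a\le b+b$ (this is the hypothesis $a\le 2b$) and $b\le a+b$ (immediate from $a\ge 0$). Hence $(X,d)$ is a genuine metric space. Because $f$ is metric-preserving, $f\circ d$ is a metric on $X$, and applying its triangle inequality to the triple $x,z,y$ gives
\[
f(a)=f(d(x,y))\le f(d(x,z))+f(d(z,y))=f(b)+f(b)=2f(b),
\]
which is the desired conclusion.

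There is no real obstacle here; the entire content of the lemma is the recognition that $a\le 2b$ is exactly the triangle-inequality condition allowing one to build a three-point metric space with an isoceles configuration of side lengths $b,b,a$. The only mild care needed is to verify that the constructed $d$ is a metric before invoking the metric-preserving hypothesis, and to handle the edge cases $a=0$ or $b=0$ separately via amenability.
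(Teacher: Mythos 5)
Your proof is correct, and it is the standard argument for this fact: the paper itself only cites the references (Bors\'ik--Dobo\v s and Dobo\v s's notes) for this lemma, and the proof given there is precisely your three-point isoceles construction with side lengths $a,b,b$ followed by the triangle inequality for $f\circ d$. The degenerate cases and the verification that $d$ is a metric are handled correctly, so there is nothing to add.
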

\begin{proof}
The proof is given in \cite[Lemma 2.5]{BD1}, and \cite[p.\ 16]{Do2}.
\end{proof}
%%%%%%%%%%%%%%%%%%
For a metric-preserving function $f$, let $K_f$ denote the set 
$$
K_f = \left\{k>0\mid f(x)\leq kx\quad\text{for all $x\geq 0$}\right\}.
$$
Recall also that we define $\inf \emptyset = +\infty$. Then we have the following result.
\begin{lem}\label{lemma5}
Let $f:[0,\infty)\to[0,\infty)$ be metric-preserving. Then $f'(0) = \inf K_f$. In particular, $f'(0)$ always exist in $\mathbb R\cup \{+\infty\}$ and
\begin{itemize}
\item[(i)] $f'(0) < +\infty$ if and only if $K_f\neq \emptyset$, and
\item[(ii)] $f'(0) = +\infty$ if and only if $K_f = \emptyset$.
\end{itemize}
\end{lem}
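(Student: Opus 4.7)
I would prove the sharper identity
\[
f'(0)\;=\;\sup_{x>0}\frac{f(x)}{x}\;=\;\inf K_f,
\]
from which parts (i) and (ii) follow at once. The equality \(\inf K_f=\sup_{x>0}f(x)/x\) is a direct unwinding of the definition, since \(k\in K_f\) iff \(k\ge f(x)/x\) for every \(x>0\); the convention \(\inf\emptyset=+\infty\) matches the case when this supremum is \(+\infty\). Hence the work reduces to showing that \(\lim_{x\to 0^+}f(x)/x\) exists in \([0,+\infty]\) and equals \(S:=\sup_{x>0}f(x)/x\).

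The inequality \(\liminf_{x\to 0^+}f(x)/x\le\limsup_{x\to 0^+}f(x)/x\le S\) is trivial, so the task is to prove \(S\le L_\ast:=\liminf_{x\to 0^+}f(x)/x\). I would use two ingredients. First, every metric-preserving \(f\) is subadditive: applying the triangle inequality for \(f\circ d\) to the three collinear points \(\{0,a,a+b\}\subset\mathbb{R}\) with the usual metric gives \(f(a+b)\le f(a)+f(b)\). Second, Lemma~\ref{lemma10} gives \(f(r)\le 2f(y)\) whenever \(0\le r\le 2y\).

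Assume \(L_\ast<\infty\) (otherwise nothing to prove) and pick \(y_n\to 0^+\) with \(f(y_n)/y_n\to L_\ast\). For an arbitrary fixed \(x>0\), write \(x=k_ny_n+r_n\) with \(k_n=\lfloor x/y_n\rfloor\) and \(r_n\in[0,y_n)\). Iterating subadditivity gives \(f(x)\le k_nf(y_n)+f(r_n)\), and Lemma~\ref{lemma10} gives \(f(r_n)\le 2f(y_n)\). Dividing by \(x\),
\[
\frac{f(x)}{x}\;\le\;\frac{k_ny_n}{x}\cdot\frac{f(y_n)}{y_n}\;+\;\frac{2f(y_n)}{x}.
\]
Since \(k_ny_n=x-r_n\in(x-y_n,x]\), one has \(k_ny_n/x\to 1\); and since \(y_n\to 0\) while \(f(y_n)/y_n\) stays bounded, \(f(y_n)\to 0\). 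The right-hand side therefore tends to \(L_\ast\), so \(f(x)/x\le L_\ast\); taking the supremum over \(x\) yields \(S\le L_\ast\). The main obstacle is controlling the remainder term \(f(r_n)\): because a metric-preserving function need not be monotonic, no naive bound of \(f\) on \([0,y_n]\) by \(f(y_n)\) is available, and Lemma~\ref{lemma10} provides precisely the two-sided comparison required, the extra factor of \(2\) being absorbed into the vanishing term \(2f(y_n)/x\).
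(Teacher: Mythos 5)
Your proof is correct. Note that the paper itself does not prove this lemma but only cites the literature (Bors\'ik--Dobo\v s, Corazza, Dobo\v s); your argument --- deriving subadditivity from the triangle inequality on \(\mathbb{R}\), iterating it along \(x = k_n y_n + r_n\), and using Lemma~\ref{lemma10} to absorb the remainder \(f(r_n)\le 2f(y_n)\to 0\) --- is essentially the standard proof found in those references, and all the edge cases (\(L_\ast=+\infty\), \(r_n=0\), the identification \(\inf K_f=\sup_{x>0}f(x)/x\) under the convention \(\inf\emptyset=+\infty\)) are handled correctly.
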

\begin{proof}
The proof can be found in \cite[Theorem 2]{BD}, \cite[Theorem 4.4]{C}, and \cite[p.\ 37--39]{Do2}.
\end{proof}
%%%%%%%%%%%%%
%%%%%%%%%%%%%%%%%%%%%%%%%%%%%%%%%%%%%%%%%%%%%%%
The next lemma is probably well-known but we give a proof here for completeness.
\begin{lem}\label{amencavegotdecreas}
If $f:[0,\infty)\to[0,\infty)$ is amenable and concave, then the function $x\mapsto \frac{f(x)}{x}$ is decreasing on $(0,\infty)$
\end{lem}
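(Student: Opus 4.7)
The plan is to use concavity directly with the anchor point $0$, exploiting $f(0)=0$ (which follows from amenability, by Lemma~\ref{lemma3}(i) and the definition, or more elementarily from $f^{-1}(\{0\})=\{0\}$ combined with $f\geq 0$). The statement to prove is that $g(x):=f(x)/x$ is (weakly) decreasing on $(0,\infty)$, so I would fix $0<x<y$ and aim to show $g(y)\leq g(x)$.

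The key step is to express $x$ as a convex combination of $0$ and $y$: namely $x=\frac{x}{y}\cdot y+\bigl(1-\frac{x}{y}\bigr)\cdot 0$, with coefficient $\lambda=\frac{x}{y}\in(0,1)$. Concavity of $f$ then yields
\[
f(x)=f\!\left(\tfrac{x}{y}\,y+\bigl(1-\tfrac{x}{y}\bigr)\cdot 0\right)\geq \tfrac{x}{y}\,f(y)+\bigl(1-\tfrac{x}{y}\bigr)f(0)=\tfrac{x}{y}\,f(y),
\]
where the last equality uses $f(0)=0$. Dividing both sides by $x>0$ gives $\frac{f(x)}{x}\geq\frac{f(y)}{y}$, which is what we wanted.

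There is no real obstacle here; the only thing to watch is to invoke amenability explicitly to justify $f(0)=0$, since without it (with $f\geq 0$ and concave) the conclusion could still be stated, but the clean inequality $f(x)\geq \tfrac{x}{y}f(y)$ relies on dropping the $f(0)$ term. Thus I would structure the short proof as: (1) record $f(0)=0$ from amenability; (2) fix $0<x<y$ and write $x$ as the convex combination above; (3) apply concavity; (4) divide by $x$ to conclude.
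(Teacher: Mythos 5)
Your proof is correct and is essentially identical to the paper's: both write the smaller point as the convex combination $\frac{x}{y}\,y+\bigl(1-\frac{x}{y}\bigr)\cdot 0$, apply concavity, use $f(0)=0$ (from amenability) to drop the extra term, and divide. No issues.
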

\begin{proof}
Let $a, b\in (0,\infty)$ and $a<b$. Since $f$ is concave, we obtain
$$
f(a) = f\left(\left(1-\frac{a}{b}\right)(0)+\left(\frac{a}{b}\right)(b)\right)\geq \left(1-\frac{a}{b}\right)f(0)+\frac{a}{b}f(b) = \frac{a}{b}f(b). 
$$
Therefore $\frac{f(a)}{a} \geq \frac{f(b)}{b}$, as desired.
\end{proof}
%%%%%%%%%%%%%%%%%%%%%%%%%%%%%%%%%%%%%%%%%%%%%%%%%%
\begin{lem}\label{Pokornylemma}
(Pokorn\'y \cite{Po}) Let $f:[0,\infty)\to[0,\infty)$. Assume that $f$ is amenable and there is a periodic function $g$ such that $f(x) = x+g(x)$ for all $x\geq 0$. Then $f$ is metric-preserving if and only if $f$ is increasing and subadditive.
\end{lem}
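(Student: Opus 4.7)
The plan is to prove both implications of the biconditional, with the reverse direction being immediate and the forward direction carrying all of the real work. For the ``if'' direction, assuming $f$ is amenable, increasing, and subadditive, Lemma~\ref{chapter2lemma9} applies verbatim to conclude that $f$ is metric-preserving, so nothing more is needed there.

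For the ``only if'' direction, I would first observe that subadditivity is automatic for any metric-preserving function and requires no use of the periodic hypothesis: applying the triangle inequality for the metric $f\circ d$ on the three-point space $\{0,\,a,\,a+b\}\subset\mathbb{R}$ (with the usual metric) immediately yields $f(a+b)\leq f(a)+f(b)$ for all $a,b\geq 0$. So the substantive task is to prove monotonicity, and this is where the decomposition $f(x)=x+g(x)$ with $g$ periodic must be used essentially.

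The key preliminary observation is that amenability gives $g(0)=f(0)=0$, so by periodicity $g(np)=g(0)=0$ for every positive integer $n$ (where $p>0$ is the period of $g$). This turns the periodic hypothesis into the quantitative identity
$$
f(x+np)=f(x)+np\quad\text{for every } x\geq 0 \text{ and every positive integer } n,
$$
and in particular $f(np)=np$. Now fix $0\leq a<b$ and choose a positive integer $n$ large enough that $2np\geq b-a$. The triple of non-negative numbers $(a+np,\,b,\,np)$ satisfies all three triangle inequalities: $a+np\leq b+np$ follows from $a\leq b$, $b\leq a+2np$ is exactly the choice of $n$, and $np\leq a+b+np$ is trivial. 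Hence these three numbers arise as the pairwise distances of some three-point metric space, so the metric-preserving hypothesis forces $f(a+np)\leq f(b)+f(np)$. Substituting the identity above, the $np$'s cancel on both sides and one is left with $f(a)\leq f(b)$, which is the required monotonicity.

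The main obstacle I expect is spotting this shifted-triangle construction. The naive inequality $|f(a)-f(b)|\leq f(b-a)$, coming from the degenerate collinear triple $(a,\,b,\,b-a)$, only controls the oscillation of $f$ and cannot force monotonicity on its own; a general metric-preserving function need not be increasing. Replacing $a$ by $a+np$ for large $n$ converts the degenerate collinear triangle into a genuine one with third side $np$, and the identity $f(np)=np$ is precisely what makes this extra side cost nothing after cancellation, isolating the sign information $f(a)\leq f(b)$.
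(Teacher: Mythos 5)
Your proof is correct and complete. Note that the paper does not actually prove Lemma~\ref{Pokornylemma}; it only cites Pokorn\'y \cite{Po} and Dobo\v s \cite{Do2}, so there is no in-paper argument to compare against, but your two ingredients are exactly the standard ones: subadditivity of any metric-preserving function via the collinear triple $\{0,a,a+b\}$, and monotonicity via the identities $f(np)=np$ and $f(x+np)=f(x)+np$ (which follow from $g(0)=f(0)=0$ and periodicity), combined with the triangle with side lengths $a+np$, $b$, $np$. The one step worth making explicit is that three positive reals satisfying all three triangle inequalities do occur as the pairwise distances of an abstract three-point metric space --- they do, since on a three-element set the triangle inequalities are precisely the metric axioms --- so the appeal to the metric-preserving hypothesis is legitimate; and the conclusion $f(a)\leq f(b)$ matches the paper's (non-strict) usage of ``increasing''.
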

\begin{proof}
The proof can be found in \cite[p.\ 32]{Do2} and \cite[Theorem 1]{Po}.
\end{proof}
%%%%%%%%%%%%%%%%%%%%%%%%
\begin{lem}\label{lemma13}
(Hu and Kirk \cite{HK}) Let $(X,d)$ be a complete metric space for which each two points can be joined by a rectifiable path, and suppose $g:X\to X$ is a local radial contraction. Then $g$ has a unique fixed point $x_0$, and $\lim_{n\to\infty}g^n(x) = x_0$ for each $x\in X$.
\end{lem}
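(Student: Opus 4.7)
The plan is to pass to the intrinsic length metric
\begin{align*}
\rho(x,y) := \inf\{\ell(\gamma) : \gamma \text{ is a rectifiable path from } x \text{ to } y\}
\end{align*}
(where $\ell(\gamma)$ is the length as in the definition, and $\rho$ is finite by hypothesis), show that $g:(X,\rho)\to(X,\rho)$ is a genuine $k$-contraction, and then run a Banach-style argument using completeness of $d$ together with the inequality $d \le \rho$. The core step is the length-contraction bound $d(g(x),g(y)) \le k\,\ell(\gamma)$ for every rectifiable $\gamma$ from $x$ to $y$. To prove it, fix $\gamma:[0,L]\to X$ and set
\begin{align*}
T := \sup\bigl\{t\in[0,L] : d(g(\gamma(0)),g(\gamma(t))) \le k\,\ell(\gamma|_{[0,t]})\bigr\}.
\end{align*}
Continuity of $g\circ\gamma$ (a local radial contraction is continuous) and of the arc-length function $t\mapsto\ell(\gamma|_{[0,t]})$ (a standard fact for rectifiable paths) make the defining inequality closed in $t$, so $T$ is attained. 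If $T<L$, continuity of $\gamma$ supplies $s>T$ with $\gamma([T,s])\subseteq B(\gamma(T),\varepsilon_{\gamma(T)})$, and then the local radial contraction at $\gamma(T)$ together with the triangle inequality gives
\begin{align*}
d(g(\gamma(0)),g(\gamma(s)))
&\le d(g(\gamma(0)),g(\gamma(T)))+d(g(\gamma(T)),g(\gamma(s)))\\
&\le k\,\ell(\gamma|_{[0,T]})+k\,d(\gamma(T),\gamma(s))\\
&\le k\,\ell(\gamma|_{[0,s]}),
\end{align*}
contradicting the supremum. Hence $T=L$, yielding the claim.

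Next, apply this bound to each sub-path $\gamma|_{[y_{j-1},y_j]}$ of an arbitrary partition $Y=\{y_0,\ldots,y_N\}$ of $[0,L]$. Additivity of length gives
\begin{align*}
\sum_{j=1}^N d(g(\gamma(y_{j-1})),g(\gamma(y_j))) \le k\sum_{j=1}^N \ell(\gamma|_{[y_{j-1},y_j]}) = k\,\ell(\gamma),
\end{align*}
so $\ell(g\circ\gamma)\le k\,\ell(\gamma)$; infimizing over $\gamma$ gives $\rho(g(x),g(y))\le k\,\rho(x,y)$, and iteration yields $\rho(g^n(x),g^n(y))\le k^n\,\rho(x,y)$. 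Telescoping with $d\le\rho$ then shows $\sum_n d(g^n(x),g^{n+1}(x))<\infty$ for any $x\in X$, since $\rho(x,g(x))<\infty$; hence $\{g^n(x)\}$ is $d$-Cauchy and $d$-converges to some $x_0$ by completeness. Continuity of $g$ forces $g(x_0)=x_0$, and the bound $d(g^n(y),x_0)\le k^n\,\rho(y,x_0)\to 0$ gives convergence from every starting point as well as uniqueness (take $y=y_0$ to be a hypothetical second fixed point).

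The main obstacle will be the length-contraction step itself: the local radial contraction is asymmetric, and $\varepsilon_x$ may shrink badly as $x$ varies along $\gamma$, so a direct Lebesgue-number argument producing a uniform subdivision of $[0,L]$ on whose every subinterval the contraction applies need not go through. The supremum trick sidesteps this by requiring only continuity of the arc-length function plus a single pointwise application of the local contraction at $\gamma(T)$; once it is in place, the bootstrap to a $\rho$-contraction and the Banach-style conclusion are routine.
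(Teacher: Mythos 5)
Your proof is correct, and it is worth noting that the paper itself offers no proof of this lemma: it only cites Hu and Kirk, together with the remark that the step in their argument which relied on a false proposition of Holmes was later justified by Jungck. What you have written is essentially that corrected argument made self-contained. The historically delicate point is precisely your ``core step'' --- that a local radial contraction satisfies $d(g(x),g(y))\le k\,\ell(\gamma)$ for every rectifiable path $\gamma$ joining $x$ to $y$, hence $\ell(g\circ\gamma)\le k\,\ell(\gamma)$ and $g$ is a genuine $k$-contraction for the induced length metric $\rho$ --- and your supremum argument for it is sound: the arc-length function $t\mapsto\ell(\gamma|_{[0,t]})$ of a continuous rectifiable path is continuous and additive over subintervals, so the admissible set is closed, $T$ is attained, and the one-step extension past $T$ using the local contraction at $\gamma(T)$ yields the required contradiction. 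The remaining bookkeeping is also handled correctly: you run the Cauchy argument in $d$ (using $d\le\rho$ and $\sum_n k^n<\infty$) rather than in the possibly incomplete length metric, and you extract uniqueness and convergence of every orbit from $d(g^n(y),x_0)\le k^n\rho(y,x_0)$ together with $\rho(y,x_0)<\infty$, which holds by rectifiable pathwise connectedness. The only cosmetic issue is the reuse of the letter $s$ both for the parameter value beyond $T$ and, implicitly, for the arc-length function; it causes no harm here.
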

As noted by Kirk and Shahzad \cite{KS}, an assertion in the proof of Lemma \ref{lemma13} given in \cite{HK} was based on a false proposition of Holmes \cite{H}. But Jungck \cite{J} proved that the assertion itself is true. Hence the proof given in \cite{HK} with minor changes is true. Kirk and Shahzad \cite{KS} apply Tan's result \cite{T} to extend some of their theorems. We will also apply Tan's result as well.
%%%%%%%%%%%%%%%%%%%%%%%%%%%%555
\begin{lem}\label{lemma12}
(Tan \cite{T}) Let $X$ be a topological space, let $x_0\in X$, and let $g:X\to X$ be a mapping for which $f:=g^N$ satisfies $\lim_{n\to\infty}f^n(x) = x_0$ for each $x\in X$. Then $\lim_{n\to\infty}g^n(x) = x_0$ for each $x\in X$. (Also if $x_0$ is the unique fixed point of $f$, it is also the unique fixed point of $g$.)
\end{lem}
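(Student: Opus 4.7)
The plan is to handle the two claims separately, using the division algorithm to reduce the orbit of $g$ to orbits of $f=g^N$, and then a short algebraic argument for the fixed point part.

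For the convergence statement, I would first write any $n\in\mathbb{N}$ as $n = qN + r$ with $q\in\mathbb{N}\cup\{0\}$ and $r\in\{0,1,\ldots,N-1\}$. Then
$$
g^n(x) \;=\; g^{qN+r}(x) \;=\; f^{q}\bigl(g^{r}(x)\bigr).
$$
Fix $x\in X$ and let $U$ be an arbitrary open neighborhood of $x_0$. For each of the finitely many values $r\in\{0,1,\ldots,N-1\}$, the hypothesis applied to the point $g^r(x)$ gives an index $Q_r$ such that $f^q(g^r(x))\in U$ whenever $q\geq Q_r$. Setting $Q=\max_{0\leq r<N} Q_r$ (a maximum of finitely many integers, which is the key reason the reduction works), every $n\geq (Q+1)N$ has the form $qN+r$ with $q\geq Q$, hence $g^n(x)\in U$. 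Since $U$ was arbitrary, $g^n(x)\to x_0$.

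For the fixed-point assertion, assume $x_0$ is the unique fixed point of $f$. I would observe that $g(x_0)$ is itself a fixed point of $f$: indeed
$$
f\bigl(g(x_0)\bigr) \;=\; g^{N+1}(x_0) \;=\; g\bigl(g^N(x_0)\bigr) \;=\; g\bigl(f(x_0)\bigr) \;=\; g(x_0),
$$
since $f(x_0)=x_0$. Uniqueness forces $g(x_0)=x_0$. Conversely, if $y\in X$ satisfies $g(y)=y$, then $f(y)=g^N(y)=y$, so $y=x_0$ by uniqueness for $f$. Thus $x_0$ is the unique fixed point of $g$.

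The argument is essentially routine; the only real subtlety is noticing that the reduction $n=qN+r$ works uniformly because the residues $r$ range over a finite set, so finitely many neighborhood conditions can be combined. The fixed point part is a one-line commutation trick using $f\circ g = g\circ f$ (which holds automatically as $f=g^N$).
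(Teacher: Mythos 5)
Your proof is correct. Note, however, that the paper itself gives no proof of this lemma at all: it is stated as a quoted result of Tan \cite{T} and used as a black box, so there is no argument in the paper to compare yours against. Your write-up supplies the standard self-contained argument: the division $n=qN+r$, the observation that $g^{qN+r}(x)=f^{q}(g^{r}(x))$, and the fact that only finitely many residues $r$ occur, so the finitely many neighborhood thresholds $Q_r$ can be combined into a single $Q$. That is exactly the right mechanism, and the details check out (for $n\geq (Q+1)N$ one indeed gets $q\geq Q$). The fixed-point part is also handled correctly: you verify $f(g(x_0))=g(x_0)$ using $f\circ g=g\circ f$, so uniqueness of the fixed point of $f$ forces $g(x_0)=x_0$, and conversely any fixed point of $g$ is a fixed point of $f=g^N$, hence equals $x_0$. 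One could alternatively deduce $g(x_0)=x_0$ from the convergence statement alone when $g$ is continuous, but your algebraic argument is cleaner and needs no continuity, matching the generality of the statement (an arbitrary topological space and an arbitrary mapping).
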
 
%%%%%%%%%%%%%%%%%%%%%%%%%%%%%%%%%%%%%%%%%%
We will use Nadler's result concerning set-valued mappings. So let us recall some more definitions. If $\varepsilon>0$ is given, a metric space $(X,d)$ is said to be \emph{$\varepsilon$-chainable} if given $a, b\in X$ there exist $x_1, x_2, \ldots, x_n\in X$ such that $a = x_1$, $b = x_n$, and $d(x_i,x_{i+1})<\varepsilon$ for all $i\in \{1, 2, \ldots, n-1\}$. The result of Nadler that we need is the following.
\begin{lem}\label{nadlerthm}
(Nadler \cite{Na}) Let $(X,d)$ be a complete $\varepsilon$-chainable metric space. If $T:X\to \mathcal{CB}(X)$ is an $(\varepsilon,k)$-uniform local multivalued contraction, then $T$ has a fixed point.
\end{lem}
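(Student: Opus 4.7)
The plan is to reduce this to the classical Nadler contraction principle for global multivalued contractions by replacing $d$ with a suitable ``chain metric'' on $X$. Define, for $a,b\in X$,
$$
\rho(a,b) = \inf\Bigl\{\sum_{i=0}^{n-1} d(x_i,x_{i+1}) : n\geq 1,\ x_0=a,\ x_n=b,\ d(x_i,x_{i+1})<\varepsilon\Bigr\}.
$$
The $\varepsilon$-chainability of $(X,d)$ guarantees the infimum is finite for every pair, and a routine check shows that $\rho$ is a metric with $\rho\geq d$ everywhere and $\rho(a,b)=d(a,b)$ whenever $d(a,b)<\varepsilon$. Hence the two metrics agree on every $d$-ball of radius less than $\varepsilon$, so they induce the same topology, have the same Cauchy sequences, and in particular $(X,\rho)$ is complete.

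Next I would show that $T$ is a global multivalued $k$-contraction with respect to the Hausdorff distance $H_\rho$ associated with $\rho$. Fix $x,y\in X$, pick any $\varepsilon$-chain $x=x_0,x_1,\ldots,x_n=y$, and let $\delta>0$. Starting from an arbitrary $u_0\in Tx$, I would use the approximate nearest-point property of the Hausdorff distance to choose $u_i\in Tx_i$ inductively with
$$
d(u_{i-1},u_i) \leq H(Tx_{i-1},Tx_i)+\eta_i \leq k\,d(x_{i-1},x_i)+\eta_i,
$$
where the $\eta_i>0$ are chosen small enough that $k\,d(x_{i-1},x_i)+\eta_i<\varepsilon$ and $\sum_i\eta_i<\delta$. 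Then $u_0,u_1,\ldots,u_n$ is itself an $\varepsilon$-chain, so $\rho(u_0,u_n)\leq k\sum d(x_{i-1},x_i)+\delta$. Infimizing over $\varepsilon$-chains from $x$ to $y$ and letting $\delta\downarrow 0$ yields $\mathrm{dist}_\rho(u_0,Ty)\leq k\rho(x,y)$ for every $u_0\in Tx$; the symmetric argument produces $H_\rho(Tx,Ty)\leq k\rho(x,y)$.

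With $T$ now a global multivalued $k$-contraction on the complete metric space $(X,\rho)$, the classical multivalued contraction principle (Nadler's set-valued analogue of Banach's theorem) furnishes a point $x^\ast\in Tx^\ast$, which is a fixed point of $T$ in the original sense since $X$ is the same underlying set. The main obstacle is the second step: one must keep the successive approximation $(u_i)$ a legitimate $\varepsilon$-chain, so that the $\rho$-bound is actually available, while simultaneously controlling the accumulated slack $\sum\eta_i$. This is resolved by the two-sided constraint on each $\eta_i$, for instance
$$
\eta_i=\min\Bigl\{2^{-i}\delta,\ \tfrac{1}{2}\bigl(\varepsilon-k\,d(x_{i-1},x_i)\bigr)\Bigr\},
$$
which is strictly positive because $k<1$ and $d(x_{i-1},x_i)<\varepsilon$ for each $i$.
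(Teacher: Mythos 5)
The paper does not prove this lemma; it is quoted verbatim from Nadler's 1969 paper, and your argument is essentially a reconstruction of Nadler's original proof: pass to the chain metric $\rho$ (Nadler's $d_\varepsilon$), observe that $d\leq\rho$ with equality below the threshold $\varepsilon$ so that completeness transfers, upgrade the $(\varepsilon,k)$-uniform local contraction to a global $H_\rho$-contraction by propagating approximate nearest points along an $\varepsilon$-chain, and invoke the global multivalued contraction principle. The chain-propagation step is handled correctly: your two-sided constraint on the slacks $\eta_i$ keeps $(u_i)$ an admissible $\varepsilon$-chain while making $\sum_i\eta_i$ arbitrarily small, and since $\mathrm{dist}_\rho(u_0,Ty)$ does not depend on the chosen chain or on $\delta$, infimizing is legitimate. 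The only point you gloss over is that a $d$-bounded set need not be $\rho$-bounded, so $Tx$ may fail to lie in $\mathcal{CB}(X,\rho)$ and the global principle does not apply as a literal black box; this is harmless because its proof (the iteration $x_{i+1}\in Tx_i$ with $\rho(x_i,x_{i+1})\leq H_\rho(Tx_{i-1},Tx_i)+k^i$) uses only closedness of the values and finiteness of the relevant $H_\rho$-distances, which your contraction estimate already supplies, but it deserves a sentence.
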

%
%
%
%
%
%
%
%%%%%%%%%%%%%%%%%%
\section{Local radial contractions and metric-preserving functions}
In this section, we will give a generalization of Theorem \ref{thmA} where the metric transform $\phi$ is replaced by a metric-preserving function. In fact, we obtain a more general result as follows:
\begin{thm}\label{thm2.1}
Let $(X,d)$ be a metric space and let $g:X\to X$. Assume that there exists $k\in (0,1)$ and a metric-preserving function $f$ satisfying the following conditions:
\begin{itemize}
\item[(a)] for each $x\in X$, there exists $\varepsilon>0$ such that for every $u\in X$ 
$$
d(x,u)<\varepsilon \quad\Rightarrow\quad (f\circ d)(g(x),g(u)) \leq kd(x,u),\quad\text{and}
$$
\item[(b)] $f'(0) > k$.
\end{itemize} 
Then $g$ is a local radial contraction.
\end{thm}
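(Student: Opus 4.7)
The plan is as follows. Hypothesis (b), together with Lemma \ref{lemma5}, translates into a one-sided linear lower bound $f(t)\geq\alpha t$ for some $\alpha\in(k,f'(0))$ and all $t$ in a neighborhood $[0,\delta]$ of $0$. Once this lower bound is available, combining it with (a) would immediately give $d(g(x),g(u))\leq (k/\alpha)\,d(x,u)$, and since $k/\alpha<1$ is independent of $x$, this is exactly the local radial contraction property. The only thing to worry about is that the linear lower bound applies only when $d(g(x),g(u))\leq\delta$, so I need to shrink the neighborhoods $\varepsilon_x$ from (a) enough to force this.

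Concretely, I would carry out three steps. First, choose $\alpha$ with $k<\alpha<f'(0)$ and use Lemma \ref{lemma5} (viewing $f'(0)$ as the right-hand derivative $\lim_{t\to 0^+}f(t)/t$, which always exists for metric-preserving $f$) to produce $\delta>0$ with $f(t)\geq\alpha t$ on $[0,\delta]$. Second, establish a positive lower bound for $f$ away from $0$: since $f$ is metric-preserving, it is amenable by Lemma \ref{lemma3}(i), so $f(\delta)>0$; and Lemma \ref{lemma10} applied with $a=\delta$ and $b=t\geq\delta$ gives $f(\delta)\leq 2f(t)$, hence $f(t)\geq f(\delta)/2=:\eta>0$ for all $t\geq\delta$. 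Third, for each $x\in X$, start with the $\varepsilon_x$ from (a) and replace it by $\varepsilon:=\min(\varepsilon_x,\eta/k)$; if $d(x,u)<\varepsilon$, then (a) yields $f(d(g(x),g(u)))\leq k\,d(x,u)<\eta$, which by step two forces $d(g(x),g(u))<\delta$, and step one then gives $\alpha\,d(g(x),g(u))\leq f(d(g(x),g(u)))\leq k\,d(x,u)$, i.e.\ $d(g(x),g(u))\leq (k/\alpha)\,d(x,u)$.

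The main obstacle I expect is step three, specifically ruling out the case $d(g(x),g(u))\geq\delta$. Without the global positive lower bound from step two, the hypothesis $f(d(g(x),g(u)))\leq k\,d(x,u)$ only controls $f\circ d$, not $d$ itself, so in principle $d(g(x),g(u))$ could be very large while $f(d(g(x),g(u)))$ remains small (metric-preserving functions need not be continuous or unbounded). Lemma \ref{lemma10} is precisely the tool that closes this gap, guaranteeing that $f$ is bounded below by $\eta>0$ outside $[0,\delta]$; combined with amenability, this makes the contradiction argument in step three go through and yields the uniform contraction constant $k/\alpha\in(0,1)$.
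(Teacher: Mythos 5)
Your proof is correct and takes essentially the same route as the paper's: both extract a linear lower bound $f(t)\geq\alpha t$ near $0$ from $f'(0)>k$ (via Lemma \ref{lemma5}), use Lemma \ref{lemma10} to bound $f$ below by a positive constant away from $0$ so that smallness of $(f\circ d)(g(x),g(u))$ forces smallness of $d(g(x),g(u))$, and then divide to obtain a uniform contraction constant less than $1$. The only difference is organizational: the paper isolates your steps two and three as a separate continuity lemma (Lemma \ref{lemma2.2}) and phrases the final estimate as a ratio decomposition with the specific constant $c=\tfrac12\left(\tfrac{k}{f'(0)}+1\right)$, whereas you inline the argument and use $k/\alpha$.
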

%%%%%%%%%%%%%%%%%%%%%%%%%
We know from Lemma \ref{lemma5} that $f'(0)$ always exists in $\mathbb R\cup\{+\infty\}$. So condition (b) in Theorem \ref{thm2.1} makes sense. To prove this theorem, we will first show that $g$ is continuous in the following lemma.
\begin{lem}\label{lemma2.2}
Suppose that the assumptions in Theorem \ref{thm2.1} hold. Then the function $g$ is continuous.
\end{lem}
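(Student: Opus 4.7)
The plan is to show that for each $p\in X$ and each sequence $u_n\to p$, we have $g(u_n)\to g(p)$. Fix $p$ and let $\varepsilon>0$ be the radius supplied by condition (a) at $p$. For every $n$ sufficiently large, $d(p,u_n)<\varepsilon$, so condition (a) yields
$f(d(g(p),g(u_n)))\leq k\,d(p,u_n)$,
and since the right-hand side tends to $0$, we get $f(d(g(p),g(u_n)))\to 0$. All that remains is to deduce from this that $d(g(p),g(u_n))\to 0$.

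The key structural fact I would isolate is: if $f$ is metric-preserving and a sequence $(t_n)\subseteq[0,\infty)$ satisfies $f(t_n)\to 0$, then $t_n\to 0$. I would prove this by contradiction. If it failed, there would be $\eta>0$ and a subsequence $(t_{n_j})$ with $t_{n_j}\geq \eta$ for every $j$. Applying Lemma \ref{lemma10} with $a=2\eta$ and $b=t_{n_j}$ (valid since $2\eta\leq 2t_{n_j}$) gives $f(2\eta)\leq 2f(t_{n_j})$ for each $j$; letting $j\to\infty$ forces $f(2\eta)\leq 0$. But Lemma \ref{lemma3}(i) says $f$ is amenable, so $f(2\eta)>0$, a contradiction. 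Specializing this fact to $t_n=d(g(p),g(u_n))$ finishes the continuity proof.

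The main obstacle is that metric-preserving functions need not be continuous, so one cannot literally invert $f$ and transport limits through $f^{-1}$. Lemma \ref{lemma10} supplies the crucial substitute: it gives a doubling-type estimate that rules out $f$ becoming arbitrarily close to $0$ on $[\eta,\infty)$ for any fixed $\eta>0$. I do not expect hypothesis (b) to enter this lemma at all—amenability of $f$ together with Lemma \ref{lemma10} already suffices for continuity of $g$—and I anticipate that $f'(0)>k$ will only be needed afterward to extract the full local radial contraction conclusion of Theorem \ref{thm2.1}.
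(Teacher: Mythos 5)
Your proof is correct, and it rests on the same key tool as the paper's: Lemma \ref{lemma10} is used to rule out $f$ taking values arbitrarily close to $0$ at arguments bounded away from $0$, which substitutes for the (unavailable) continuity or invertibility of $f$. The execution differs in two respects. First, the paper argues in $\varepsilon$--$\delta$ form: it picks $\delta_3$ with $f(\delta_3)/\delta_3>k$ (this is where it invokes hypothesis (b), i.e.\ $f'(0)>k$), deduces from Lemma \ref{lemma10} that $f(b)\geq f(\delta_3)/2>k\delta_3/2$ whenever $b\geq\delta_3/2$, and then shows $d(x,u)<\delta_3/2$ forces $f(d(g(x),g(u)))<k\delta_3/2$, hence $d(g(x),g(u))<\delta_3/2$. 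Your sequential version replaces the quantitative bound $f(\delta_3)>k\delta_3$ by the qualitative fact $f(2\eta)>0$ from amenability (Lemma \ref{lemma3}(i)), and your closing observation is accurate: hypothesis (b) is not needed for continuity, since one could equally well choose $\delta$ with $k\delta<f(\delta_3)/2$ using only $f(\delta_3)>0$. So your argument proves the slightly stronger statement that condition (a) alone, for any metric-preserving $f$, already forces $g$ to be continuous; the paper's use of (b) here is a convenience that streamlines the constants, not an essential ingredient.
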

%%%%%%%%%%%%%%%%%%%%%%%%%%%%
 As a consequence of Theorem \ref{thm2.1}, we can replace the metric transform $\phi$ in Theorem \ref{thmA} by a metric-preserving function and obtain an extension of Theorem \ref{thmA}.
%%%%%%%%%%%%%%%%%%%%%%%%%%%%%%%%%%%
\begin{thm}\label{thm2.5}
With the same assumptions in Theorem \ref{thm2.1} except that condition (b) is replaced by (b$'$): there exists $c\in (0,1)$ such that $f(ct)\geq kt$ for all $t>0$ sufficiently small. Then $g$ is a local radial contraction.
\end{thm}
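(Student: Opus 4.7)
The plan is to reduce the statement to Theorem \ref{thm2.1} by showing that condition (b$'$) implies condition (b) of that theorem. Since the hypothesis (a) and the metric-preserving property of $f$ are shared, once (b) is established the conclusion that $g$ is a local radial contraction follows immediately from Theorem \ref{thm2.1}.

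First, I would unpack (b$'$) via the substitution $s = ct$, where $c\in(0,1)$ is the constant supplied by (b$'$). Since $c$ is a fixed positive number, $s$ ranges over a one-sided neighborhood of $0$ precisely when $t$ does, and the inequality $f(ct)\ge kt$ becomes
$$
\frac{f(s)}{s} \;\geq\; \frac{k}{c}
$$
for all sufficiently small $s>0$. Next I would invoke Lemma \ref{lemma5}, which guarantees that for the metric-preserving function $f$ the right derivative $f'(0) = \lim_{s\to 0^+} f(s)/s$ exists in $\mathbb{R}\cup\{+\infty\}$ (note that $f(0)=0$ follows from amenability in Lemma \ref{lemma3}(i)). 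Passing to the limit as $s\to 0^+$ in the previous inequality yields
$$
f'(0) \;\geq\; \frac{k}{c} \;>\; k,
$$
the strict inequality holding because $c<1$ and $k>0$. This is exactly condition (b) of Theorem \ref{thm2.1}, and a direct application of that theorem completes the proof.

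There is essentially no obstacle beyond this reduction: the argument is a one-line substitution followed by a single limit. The only point that requires genuine care is that one needs an honest limit, not merely a $\liminf$, to deduce $f'(0)\geq k/c$; this is precisely the content of Lemma \ref{lemma5}, and is the one place where the metric-preserving hypothesis on $f$ is used essentially over and above the use already made of it by Theorem \ref{thm2.1}.
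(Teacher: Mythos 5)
Your proposal is correct and follows essentially the same route as the paper: both reduce to Theorem \ref{thm2.1} by showing $f'(0)>k$, using the substitution $s=ct$ in (b$'$) together with Lemma \ref{lemma5} to pass to the limit and obtain $f'(0)\geq k/c>k$.
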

%%%%%%%%%%%%%%%%%%%%%%%%%%%%%%%%%%%%%%%%%
\begin{rem}
As noted by Kirk and Shahzad \cite[Remark 2.5]{KS}, \cite[Proposition 2.6]{KS}, metric transforms satisfying condition (b) in Theorem \ref{thmA} are numerous. Proposition \ref{prop2.3}, Example \ref{examafterprop2.3}, and Example \ref{example25fab} (to be given after the proof of Theorem \ref{thm2.5}) show that the class of metric-preserving functions satisfying condition (b) in Theorem \ref{thmA} is larger than the class of metric transforms satisfying the same condition. Hence the class of such functions is even more numerous and Theorem \ref{thm2.5} is indeed an extension of Theorem \ref{thmA}.
\end{rem}
%%%%%%%%%%%%%%%%%%%%%%%%%%%%%%%%%%%
Now let us give the proof of Lemma \ref{lemma2.2}, Theorem \ref{thm2.1}, and Theorem \ref{thm2.5} as follows.\vspace{0.3cm}\\
\noindent\textbf{Proof of Lemma \ref{lemma2.2}}\vspace{0.2cm}\\
Let $x\in X$ and let $\varepsilon >0$. Since $k<f'(0) = \lim_{y\to 0^+}\frac{f(y)-f(0)}{y-0} = \lim_{y\to 0^+}\frac{f(y)}{y}$, there exists $\delta_1 > 0$ such that 
\begin{equation}\label{eq1lemma2.2}
0<y\leq \delta_1 \Rightarrow \frac{f(y)}{y} > k.
\end{equation}
By condition (a), there exists $\delta_2 > 0$ such that for every $u\in X$,
\begin{equation}\label{eq2lemma2.2}
d(x,u) < \delta_2 \Rightarrow (f\circ d)(g(x),g(u)) \leq kd(x,u).
\end{equation} 
Let $\delta_3 = \min\{\delta_1, \delta_2, \varepsilon\}$. Then by (\ref{eq1lemma2.2}), we obtain 
\begin{equation}\label{eq3lemma2.2}
\frac{f(\delta_3)}{\delta_3} > k.
\end{equation}
Since $f$ is metric-preserving, we obtain by Lemma \ref{lemma10}, and (\ref{eq3lemma2.2}) that for every $b\in [0,\infty)$
\begin{equation}\label{eq4lemma2.2}
b\geq \frac{\delta_3}{2} \Rightarrow f(b) \geq \frac{f(\delta_3)}{2} > \frac{k\delta_3}{2}.
\end{equation}
Now let $\delta = \frac{\delta_3}{2}$ and $u\in X$ be such that $d(x,u)<\delta$. Then by (\ref{eq2lemma2.2}), we obtain
$$
f(d(g(x),g(u))) \leq kd(x,u) < k\delta = \frac{k\delta_3}{2}.
$$
Then by (\ref{eq4lemma2.2}), $d(g(x),g(u)) < \frac{\delta_3}{2} \leq \frac{\varepsilon}{2} < \varepsilon$. This shows that $g$ is continuous, as required. \qed\vspace{0.3cm}\\
\noindent\textbf{Proof of Theorem \ref{thm2.1}}\vspace{0.2cm}\\
Let $c = \frac{1}{2}\left(\frac{k}{f'(0)}+1\right)$ where if $f'(0) = +\infty$, we define $\frac{k}{f'(0)}$ to be zero and $c = \frac{1}{2}(0+1) = \frac{1}{2}$. Then $0 \leq \frac{k}{f'(0)} < c < 1$. Consider 
$$
f'(0) = \lim_{y\to 0^+}\frac{f(y)-f(0)}{y-0} = \lim_{y\to 0^+}\frac{f(y)}{y}.
$$
Since $f'(0) > \frac{k}{c}$, there exists $\delta_1>0$ such that 
\begin{equation}\label{eq1thm2.1}
0<y<\delta_1 \Rightarrow \frac{f(y)}{y} > \frac{k}{c}.
\end{equation}
To show that $g$ is a local radial contraction with the contraction constant $c$, let $x\in X$. By Lemma \ref{lemma2.2}, $g$ is continuous at $x$. So there exists $\delta_2 > 0$ such that for every $u\in X$, 
\begin{equation}\label{eq2thm2.1}
d(x,u)<\delta_2 \Rightarrow d(g(x),g(u)) < \delta_1.
\end{equation}
By condition (a), there exists $\delta_3>0$ such that for every $u\in X$,
\begin{equation}\label{eq3thm2.1}
d(x,u)<\delta_3 \Rightarrow (f\circ d)(g(x),g(u)) \leq kd(x,u).
\end{equation}
Now let $\varepsilon = \min\{\delta_1, \delta_2, \delta_3\}$ and $u\in X$ be such that $d(x,u)<\varepsilon$. We need to show that $d(g(x),g(u)) \leq cd(x,u)$. If $d(g(x),g(u)) = 0$, then we are done. So assume that $d(g(x),g(u)) > 0$. Then $0 < d(x,u) < \varepsilon$ and we obtain by (\ref{eq3thm2.1}) that 
\begin{equation}\label{eq4thm2.1}
\frac{(f\circ d)(g(x),g(u))}{d(x,u)} \leq k.
\end{equation}
The left hand side of (\ref{eq4thm2.1}) is 
\begin{align}\label{eq5thm2.1}
\frac{(f\circ d)(g(x),g(u))}{d(x,u)} &= \frac{f(d(g(x),g(u)))}{d(g(x),g(u))}\cdot \frac{d(g(x),g(u))}{d(x,u)}\notag\\
&> \frac{k}{c}\frac{d(g(x),g(u))}{d(x,u)},
\end{align}
where the above inequality is obtained from (\ref{eq2thm2.1}) and (\ref{eq1thm2.1}). From (\ref{eq4thm2.1}) and (\ref{eq5thm2.1}), we obtain 
$$
\frac{k}{c}\frac{d(g(x),g(u))}{d(x,u)} < k,
$$
which implies the desired result. This completes the proof. \qed\vspace{0.3cm}\\
\noindent\textbf{Proof of Theorem \ref{thm2.5}}\vspace{0.2cm}\\
By Lemma \ref{lemma5}, we know that $f'(0)$ exists in $\mathbb R\cup\{+\infty\}$ and by Theorem \ref{thm2.1}, it suffices to show that $f'(0)>k$. So we can assume further that $f'(0)$ exists in $\mathbb R$. Now $f'(0) = \lim_{y\to 0^+}\frac{f(y)-f(0)}{y-0} = \lim_{y\to 0^+}\frac{f(y)}{y}$. Since the limits involved in the following calculation exist, we obtain 
$$
\lim_{y\to 0^+}\frac{f(y)}{y} = \lim_{t\to 0^+}\frac{f(ct)}{ct} \geq \lim_{t\to 0^+}\frac{kt}{ct} = \frac{k}{c} > k. 
$$
Therefore $f'(0)>k$, as desired. \qed\vspace{0.3cm}\\
As noted earlier, we will show that the class of metric-preserving functions and the class of metric-preserving functions satisfying condition (b) in Theorem \ref{thmA} are, respectively, larger than the class of metric transforms and the class of metric transforms satisfying condition (b) in Theorem \ref{thmA}.
\begin{prop}\label{prop2.3}
Every metric transform is metric-preserving.
\end{prop}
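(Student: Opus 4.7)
My plan is to deduce this directly from Lemma \ref{lemma3}(ii), which states that any amenable concave function is metric-preserving. Since a metric transform is concave by definition, the only thing to verify is amenability, i.e.\ that $f^{-1}(\{0\}) = \{0\}$.

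First I would observe that $f(0)=0$ is built into the definition of a metric transform, so $0 \in f^{-1}(\{0\})$. For the reverse inclusion, suppose $x \in [0,\infty)$ with $f(x)=0$. If $x>0$, then strict monotonicity gives $f(x) > f(0) = 0$, a contradiction. Hence $x=0$, which shows $f^{-1}(\{0\}) = \{0\}$, i.e.\ $f$ is amenable.

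Since $f$ is amenable and concave, Lemma \ref{lemma3}(ii) immediately yields that $f$ is metric-preserving, completing the proof. There is no substantial obstacle here; the proposition is essentially an unpacking of definitions combined with the previously cited fact that amenability plus concavity implies the metric-preserving property.
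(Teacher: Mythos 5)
Your proof is correct and follows exactly the same route as the paper: amenability is deduced from $f(0)=0$ together with strict monotonicity, and then Lemma \ref{lemma3}(ii) is applied to the amenable concave function $f$. The only difference is that you spell out the amenability check in slightly more detail than the paper does.
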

\begin{proof}
Let $f$ be a metric transform. Since $f(0) = 0$ and $f$ is strictly increasing, $f$ is amenable. Since $f$ is amenable and concave, we obtain by Lemma \ref{lemma3} (ii) that $f$ is metric-preserving.
\end{proof}
%%%%%%%%%%%%%%%%%%%%%%%%%%%%%
\begin{cor}
Kirk and Shahzad's result (Theorem \ref{thmA}) holds.
\end{cor}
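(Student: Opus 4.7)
The plan is to observe that the corollary is essentially immediate once Proposition~\ref{prop2.3} and Theorem~\ref{thm2.5} are in hand: Theorem~\ref{thmA} is just the specialization of Theorem~\ref{thm2.5} to the subclass of metric-preserving functions that happen to be metric transforms.

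More precisely, I would first let $(X,d)$, $g:X\to X$, a metric transform $\phi$ and a constant $k\in(0,1)$ satisfy the hypotheses of Theorem~\ref{thmA}. By Proposition~\ref{prop2.3}, the metric transform $\phi$ is a metric-preserving function, so setting $f:=\phi$ we have a metric-preserving function $f$ for which condition~(a) of Theorem~\ref{thm2.5} is exactly condition~(a) of Theorem~\ref{thmA}. Next I would note that condition~(b) of Theorem~\ref{thmA}, namely the existence of $c\in(0,1)$ with $kt\le \phi(ct)$ for all sufficiently small $t>0$, is verbatim condition~(b$'$) of Theorem~\ref{thm2.5} written for $f=\phi$.

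Having matched the hypotheses, I would invoke Theorem~\ref{thm2.5} to conclude that $g$ is a local radial contraction, which is precisely the conclusion of Theorem~\ref{thmA}. Since there are no side conditions that need to be verified beyond the two identifications above, no real work is required; the only point worth pausing on is the observation in Proposition~\ref{prop2.3}, which itself is a one-line consequence of Lemma~\ref{lemma3}(ii) applied to the amenable concave function $\phi$.

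The main obstacle, if one can even call it that, is simply making the identification transparent: one must notice that condition~(b$'$) of Theorem~\ref{thm2.5} was formulated as a clean restatement of condition~(b) of Theorem~\ref{thmA}, so no inequality manipulation is required. Thus the corollary follows immediately, and Theorem~\ref{thm2.5} is honestly an extension of Theorem~\ref{thmA}.
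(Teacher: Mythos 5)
Your proposal is correct and follows exactly the paper's route: the paper also deduces the corollary immediately from Proposition~\ref{prop2.3} (every metric transform is metric-preserving) together with Theorem~\ref{thm2.5}, whose condition~(b$'$) is verbatim condition~(b) of Theorem~\ref{thmA}. You have simply written out the identifications that the paper leaves implicit.
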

\begin{proof}
This follows immediately from Proposition \ref{prop2.3} and Theorem \ref{thm2.5}.
\end{proof}
%%%%%%%%%%%%%%%%%%%%%%%%%%%%5
\begin{exam}\label{examafterprop2.3}
Let $f, g, h :[0,\infty)\to [0,\infty)$ be given by 
\begin{align*}
f(x) &= \begin{cases}
0, &\text{if $x=0$};\\
1, &\text{if $x>0$ and $x\in \mathbb Q$};\\
2, &\text{if $x\in \mathbb Q^c$},
\end{cases} \quad\quad
g(x) = \begin{cases}
x, &\text{if $x\in[0,1]$};\\
1, &\text{if $x>1$},
\end{cases} \\
h(x) &= \begin{cases}
x, &x\in [0,1];\\
1, &x\in [1,10];\\
x-9, &x\in (10,11);\\
2, &x\geq 11.
\end{cases}
\end{align*}
Since $f(x)\in[1,2]$ for all $x>0$, $f$ is tightly bounded. Therefore by Lemma \ref{lemma8}, $f$ is metric-preserving. It is easy to see that $f$ is not increasing (and is not concave either). So $f$ is not a metric transform. It is easy to see that $g$ is amenable and concave, so it is metric-preserving, by Lemma \ref{lemma3} (ii). In addition, if $c = k = \frac{1}{2}\in(0,1)$, then $g(ct) \geq kt$ for all $t\in [0,1]$. So $g$ satisfies condition (b) in Theorem \ref{thmA}. But $g$ is not a metric transform because it is not strictly increasing. For $h$, we proved in \cite[Example 14]{PT1} that $h$ is metric-preserving. Similar to $g$, the function $h$ satisfies the condition (b) in Theorem \ref{thmA}. It is easy to see that $h$ is neither strictly increasing nor concave. Therefore $h$ is not a metric transform.
\end{exam}
%%%%%%%%%%%%%%%%%%%%%%%%%%%%%%%%%%%%%%
We can generate more functions similar to $g$ given in Example \ref{examafterprop2.3} as follows.
%%%%%%%%%%%%%%%%%%%%%%%%%%%%%%%%%%
\begin{exam}\label{example25fab}
Let $a\geq 1$ and $b>0$. Define $f_{a,b}:[0,\infty)\to[0,\infty)$ by 
$$
f_{a,b}(x) = \begin{cases}
ax, &\text{if $x\in[0,b]$};\\
ab, &\text{if $x>b$}.
\end{cases} 
$$
Then $f_{a,b}$ is amenable and concave. So by Lemma \ref{lemma3} (ii), $f_{a,b}$ is metric-preserving. We also have $f'_{a,b}(0) = a\geq 1$. So it satisfies condition (b) in Theorem \ref{thm2.1}. However, $f_{a,b}$ is not a metric transform because it is not strictly increasing.
\end{exam} 
\begin{rem}
Some natural questions concerning the relation of metric transforms, metric-preserving functions, and condition (b) can be answered by Example \ref{examafterprop2.3} and Example \ref{example25fab}:
\begin{itemize}
\item[Q1:] Is there a continuous metric-preserving function which is not a metric transform?
\item[A1:] Yes, $g$ and $h$ given in Example \ref{examafterprop2.3} and $f_{a,b}$ given in Example \ref{example25fab} are such functions.
\item[Q2:] Is there any nowhere continuous metric-preserving function which is not a metric transform?
\item[A2:] Yes, $f$ given in Example \ref{examafterprop2.3} is such a function.
\item[Q3:] Is there a nowhere monotone metric-preserving function which is not a metric transform?
\item[A3:] Yes, $f$ given in Example \ref{examafterprop2.3} is such a function.
\item[Q4:] Is there a  metric-preserving function which is concave and satisfies condition (b) in Theorem \ref{thmA} but it is not a metric transform?
\item[A4:] Yes, $g$ given in Example \ref{examafterprop2.3} and $f_{a,b}$ given in Example \ref{example25fab} are such functions.
\end{itemize}
\end{rem}
%%%%%%%%%%%%%%%%%%%%%%%%%%%%%%%%%%%%%%
Now that we have obtained two extensions of Theorem \ref{thmA}, we give two generalizations of Theorem \ref{thmB} as follows.
\begin{thm}
The following statements hold:
\begin{itemize}
\item[(a)] Suppose, in addition to the assumptions in Theorem \ref{thm2.1}, $X$ is complete and rectifiably pathwise connected. Then $g$ has a unique fixed point $x_0$, and $\lim_{n\to\infty}g^n(x) = x_0$ for each $x\in X$.
\item[(b)] Suppose, in addition to the assumptions in Theorem \ref{thm2.5}, $X$ is complete and rectifiably pathwise connected. Then $g$ has a unique fixed point $x_0$, and $\lim_{n\to\infty}g^n(x) = x_0$ for each $x\in X$.
\end{itemize}
\end{thm}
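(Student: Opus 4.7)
The plan is to chain together the reductions already established in this section with the Hu--Kirk fixed point principle recorded as Lemma~\ref{lemma13}. In both (a) and (b), the hypotheses on $f$ and $k$ are used exclusively to force $g$ to be a local radial contraction on $(X,d)$, while the added topological hypotheses on $X$ are exactly what is needed to feed $g$ into Lemma~\ref{lemma13} and extract both a unique fixed point $x_0$ and the convergence of every orbit to $x_0$.

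For part (a), I would first invoke Theorem~\ref{thm2.1} with the given metric-preserving function $f$ and the constant $k\in(0,1)$: the conditions (a) and (b) required there are precisely the conditions assumed here, so Theorem~\ref{thm2.1} delivers that $g$ is a local radial contraction on $(X,d)$. Together with completeness of $X$ and the rectifiably pathwise connected hypothesis, this is exactly the situation covered by Lemma~\ref{lemma13}, which yields a unique fixed point $x_0\in X$ of $g$ and the assertion $\lim_{n\to\infty}g^n(x)=x_0$ for every $x\in X$.

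For part (b), the argument is structurally identical, with Theorem~\ref{thm2.5} replacing Theorem~\ref{thm2.1} at the first step: condition (a) together with condition (b$'$) forces $g$ to be a local radial contraction, after which Lemma~\ref{lemma13} finishes the proof in the same way. In fact, since Theorem~\ref{thm2.5} is itself proved by showing that (b$'$) implies $f'(0)>k$ and then applying Theorem~\ref{thm2.1}, part~(b) reduces formally to part~(a).

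There is really no main obstacle beyond correctly marshalling the citations; the substantive work has already been carried out in Theorem~\ref{thm2.1}, and the only subtlety worth noting is the status of Lemma~\ref{lemma13}, which, as remarked just after its statement, relies on an assertion whose original justification via Holmes was flawed but was later validated by Jungck. With that caveat acknowledged, the two statements follow in a single line each from the results already in hand.
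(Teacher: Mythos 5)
Your proposal matches the paper's proof exactly: both parts are obtained by invoking Theorem~\ref{thm2.1} (respectively Theorem~\ref{thm2.5}) to conclude that $g$ is a local radial contraction and then applying the Hu--Kirk result, Lemma~\ref{lemma13}. The argument is correct and no further comment is needed.
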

\begin{proof}
Part (a) follows immediately from Theorem \ref{thm2.1} and Lemma \ref{lemma13}. Part (b) follows immediately from Theorem \ref{thm2.5} and Lemma \ref{lemma13}.
\end{proof}
%%%%%%%%%%%%%%%%%%%%%%%%%%%%%%%%%%%%%%%%%%%%%%%%%%%
%%%%%%%%%%%%%%%%%%%%%%%%%%%%%%%%%%%%%%%%%%%%%%%%%%%
Finally, we remark that Kirk and Shahzad use Tan's result (Lemma \ref{lemma12}) to extend Theorem \ref{thmB} further \cite[Theorem 2.3 and Theorem 2.8]{KS}. We similarly apply their argument to obtain the following.
% extend Theorem \ref{thm2.1} and Theorem \ref{thm26}, Theorem \ref{copythm26} as follows:
\begin{thm}\label{thm26}
Let $X$ be a metric space which is complete and rectifiably pathwise connected, and suppose $g:X\to X$ is a mapping for which
\begin{itemize}
\item[(a)] $g^N$ satisfies the assumptions in Theorem \ref{thm2.1} for some $N\in \mathbb N$, or
\item[(b)] $g^M$ satisfies the assumptions in Theorem \ref{thm2.5} for some $M\in \mathbb N$.
\end{itemize}
Then $g$ has a unique fixed point $x_0$, and $\lim_{n\to\infty}g^n(x) = x_0$ for each $x\in X$.
\end{thm}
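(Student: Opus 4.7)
The plan is to assemble the conclusion from the ingredients already in place: Theorem \ref{thm2.1} (resp.\ Theorem \ref{thm2.5}) to get a local radial contraction, Lemma \ref{lemma13} (Hu--Kirk) to produce a unique fixed point and convergence for iterates of the contraction, and finally Lemma \ref{lemma12} (Tan) to pull the fixed point and iteration behavior back from the iterate $g^N$ (resp.\ $g^M$) to $g$ itself.

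For part (a), I would set $f:=g^N$. Since $f$ satisfies the hypotheses of Theorem \ref{thm2.1}, Theorem \ref{thm2.1} gives that $f$ is a local radial contraction on the metric space $(X,d)$. Because $X$ is complete and rectifiably pathwise connected, Lemma \ref{lemma13} applies to $f$ and yields a unique fixed point $x_0 \in X$ of $f$ with $\lim_{n\to\infty} f^n(x) = x_0$ for every $x \in X$. Now Lemma \ref{lemma12} is tailor-made to bridge the gap: since $f = g^N$ with $X$ a topological space and $\lim_{n\to\infty} f^n(x) = x_0$ for each $x \in X$, the lemma forces $\lim_{n\to\infty} g^n(x) = x_0$ for each $x \in X$, and the parenthetical statement in Lemma \ref{lemma12} transfers uniqueness of the fixed point from $f$ to $g$. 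This completes part (a).

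Part (b) is entirely analogous. I would set $f:=g^M$, invoke Theorem \ref{thm2.5} (instead of Theorem \ref{thm2.1}) to conclude that $f$ is a local radial contraction, apply Lemma \ref{lemma13} exactly as above to obtain the unique fixed point $x_0$ of $f$ together with $\lim_{n\to\infty} f^n(x) = x_0$, and then apply Lemma \ref{lemma12} to upgrade these conclusions from $g^M$ to $g$.

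There is essentially no hard step here: the only minor point worth spelling out is that Lemma \ref{lemma12} delivers \emph{both} the iteration limit for $g$ and the uniqueness of $x_0$ as a fixed point of $g$ (not merely of $g^N$ or $g^M$), so nothing needs to be proved separately about $g$'s fixed points beyond what the two cited lemmas already package together. The proof is therefore a two- or three-line assembly in each of parts (a) and (b), mirroring exactly the Kirk--Shahzad argument that produced \cite[Theorem 2.8]{KS} from \cite[Theorem 2.3]{KS}, with Theorems \ref{thm2.1} and \ref{thm2.5} playing the role of the contraction-producing step.
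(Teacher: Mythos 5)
Your proposal is correct and follows exactly the same route as the paper's own (one-line) proof: Theorem \ref{thm2.1} or Theorem \ref{thm2.5} to show that the iterate is a local radial contraction, Lemma \ref{lemma13} to obtain the unique fixed point and convergence of its iterates, and Lemma \ref{lemma12} to transfer both conclusions from $g^N$ (resp.\ $g^M$) back to $g$. You simply spell out the assembly that the paper leaves implicit.
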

\begin{proof}
This follows immediately from Theorem \ref{thm2.1}, Theorem \ref{thm2.5}, Lemma \ref{lemma13}, and Lemma \ref{lemma12}.
\end{proof}
%%%%%%%%%%%%%%%%%%%%%%%%%%%%%%%%%%%%%%%%%%%%%%%%%%%%%%%%
%%%%%%%%%%%%%%%%%%%%%%%%%%%%%%%%%%%%%%%%%%%%%%%%%%%%%%%%
\noindent \textbf{Conclusion:} We have obtained extensions of the main results given by Kirk and Shahzad in \cite[Theorem 2.2, Theorem 2.3, and Theorem 2.8]{KS}. We will obtain more results in the next section.
\section{Set-valued contractions}
Kirk and Shahzad \cite{KS} also gives an analog of Theorem \ref{thmA} and Theorem \ref{thmB} for set-valued mappings. Our purpose in this section is to obtain an analog of Theorem \ref{thm2.1} and Theorem \ref{thm2.5} for set-valued mappings as well. First let us recall some definitions and results concerning set-valued mappings.\\
\indent Let $(X,d)$ be a metric space and let $\mathcal{CB}(X)$ be the family of nonempty, closed, and bounded subsets of $X$. The usual Hausdorff distance on $\mathcal{CB}(X)$ is defined as
$$
H(A,B) = \max\{\rho(A,B),\rho(B,A)\},
$$
where $A, B\in \mathcal{CB}(X)$, $\rho(A,B) = \sup_{x\in A}d(x,B)$, $\rho(B,A) = \sup_{x\in B}d(x,A)$.
\begin{defn}
Let $T:X\to\mathcal{CB}(X)$. Then
\begin{itemize}
\item[(i)] $T$ is called \emph{a multivalued contraction mapping} if there exists a constant $k\in (0,1)$ such that $H(Tx,Ty)\leq kd(x,y)$ for all $x, y\in X$.
\item[(ii)] For $\varepsilon>0$ and $k\in (0,1)$, $T$ is called \emph{an $(\varepsilon,k)$-uniform local multivalued contraction} if for every $x, y\in X$
$$
d(x,y)<\varepsilon \Rightarrow H(Tx,Ty)\leq kd(x,y).
$$
\item[(iii)] A point $x\in X$ is said to be \emph{a fixed point of $T$} if $x\in Tx$.
\end{itemize}
\end{defn} 
%%%%%%%%%%%%%%%%%%%%%%%%%%%%%%%%55
Kirk and Shahzad's results on set-valued mappings which will be extended are as follows:
\begin{thm}\label{kirkandshahzadthm3.4}
(Kirk and Shahzad \cite[Theorem 3.4]{KS}) Let $(X,d)$ be a metric space and $T:X\to\mathcal{CB}(X)$. Suppose there exists a metric transform $\phi$ and $k\in (0,1)$ such that the following conditions hold:
\begin{itemize}
\item[(a)] For each $x, y\in X$, $\phi(H(Tx,Ty))\leq kd(x,y)$.
\item[(b)] There exists $c\in (0,1)$ such that for $t>0$ sufficiently small, $kt\leq \phi(ct)$.
\end{itemize}
Then for $\varepsilon > 0$ sufficiently small, $T$ is an $(\varepsilon,c)$-uniform local multivalued contraction on $(X,d)$.
\end{thm}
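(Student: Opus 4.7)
The plan is to exploit the two given inequalities together with the strict monotonicity built into the definition of a metric transform, and to observe that—in contrast with the proof of Theorem \ref{thm2.1}—no appeal to continuity of $T$ or to a pointwise choice of $\varepsilon$ is needed here, since condition (a) is assumed globally. In effect, the argument is a sharper, global analogue of the manipulation used in the proof of Theorem \ref{thm2.5}.

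First I would use condition (b) to choose a threshold: there exist $c\in(0,1)$ and $\delta>0$ such that $kt\leq\phi(ct)$ for every $t\in(0,\delta)$. I would then set $\varepsilon=\delta$. Next I would take any $x,y\in X$ with $d(x,y)<\varepsilon$. The case $d(x,y)=0$ gives $Tx=Ty$ (so $H(Tx,Ty)=0$) and the conclusion is immediate, so I would assume $0<d(x,y)<\varepsilon$. Writing $t=d(x,y)$, condition (a) gives
\[
\phi\bigl(H(Tx,Ty)\bigr)\leq k\,d(x,y)=kt,
\]
while the choice of $\varepsilon$ together with condition (b) gives
\[
kt\leq \phi(ct)=\phi\bigl(c\,d(x,y)\bigr).
\]
Chaining these yields $\phi(H(Tx,Ty))\leq \phi(c\,d(x,y))$.

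The final step is the key observation: because $\phi$ is a metric transform, it is strictly increasing on $[0,\infty)$, hence injective with a monotone inverse, and therefore $\phi(a)\leq\phi(b)$ implies $a\leq b$ for $a,b\in[0,\infty)$. Applied to our inequality, this yields $H(Tx,Ty)\leq c\,d(x,y)$, which is precisely the $(\varepsilon,c)$-uniform local multivalued contraction condition. I do not expect any serious obstacle in this proof; the only point requiring care is making sure that condition (b) is read in the direction needed (bounding $kt$ from above by $\phi(ct)$), and that the strict monotonicity of $\phi$—which is a part of the metric transform definition and is not automatic for a general metric-preserving function—is explicitly invoked when inverting $\phi$.
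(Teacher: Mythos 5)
Your proof is correct, and it is genuinely different from the route the paper takes. You close the argument by inverting $\phi$: from $\phi(H(Tx,Ty))\leq kt\leq\phi(c\,d(x,y))$ you conclude $H(Tx,Ty)\leq c\,d(x,y)$ using the strict monotonicity of a metric transform, and that step is sound (if $a>b$ then $\phi(a)>\phi(b)$). This is essentially the original Kirk--Shahzad argument, and it is shorter and more elementary than what the paper does. The paper, however, never inverts $\phi$; it deduces this theorem as a special case (via Proposition \ref{prop2.3}) of Theorem \ref{genkirkandshahzadthm3.4} and Corollary \ref{corgenkirkandshahzadthm3.4}, where $\phi$ is replaced by a general metric-preserving function $f$. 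Such an $f$ need not be monotone, so the inversion step is unavailable; instead the paper first converts condition (b) into the derivative condition $f'(0)>k$, uses Lemma \ref{lemma10} (the doubling inequality $0\leq a\leq 2b\Rightarrow f(a)\leq 2f(b)$) to force $H(Tx,Ty)$ below the threshold where $f(z)/z>k/c$, and then divides to get the contraction estimate. The trade-off is clear: your approach buys brevity but is tied to the invertibility of $\phi$, which you correctly flag as the crux; the paper's approach is longer but yields the strictly more general statement for metric-preserving functions. As a minor simplification, in the case $d(x,y)=0$ you could also argue directly from condition (a) that $\phi(H(Tx,Ty))=0$, hence $H(Tx,Ty)=0$ by amenability of $\phi$, without invoking $Tx=Ty$.
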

%%%%%%%%%%%%%%%%%%%%%%%%%%%%%%%%%%%%%%%
\begin{thm}
(Kirk and Shahzad \cite[Theorem 3.6]{KS}) If, in addition to the assumptions of Theorem \ref{kirkandshahzadthm3.4}, $X$ is complete and connected, then $T$ has a fixed point.
\end{thm}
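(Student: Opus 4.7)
The plan is to combine Theorem \ref{kirkandshahzadthm3.4} with Nadler's fixed point theorem (Lemma \ref{nadlerthm}). Theorem \ref{kirkandshahzadthm3.4} already delivers that, for all $\varepsilon>0$ sufficiently small, the map $T$ is an $(\varepsilon,c)$-uniform local multivalued contraction on $(X,d)$; completeness is inherited by hypothesis. What Lemma \ref{nadlerthm} additionally needs is that $(X,d)$ be $\varepsilon$-chainable for the same $\varepsilon$, and this is precisely where the connectedness hypothesis enters.

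First I would invoke Theorem \ref{kirkandshahzadthm3.4} to fix some $\varepsilon_0>0$ such that $d(x,y)<\varepsilon_0$ implies $H(Tx,Ty)\leq c\,d(x,y)$. Next I would check that any connected metric space $X$ is $\varepsilon$-chainable for every $\varepsilon>0$: pick any $a\in X$ and set
$$
A_\varepsilon=\{x\in X:\text{there is an }\varepsilon\text{-chain in }X\text{ from }a\text{ to }x\}.
$$
The set $A_\varepsilon$ is nonempty (it contains $a$) and open, because if $x\in A_\varepsilon$ then the open ball $B(x,\varepsilon)$ lies in $A_\varepsilon$ (just append one more point to the chain). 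The same observation shows that the complement of $A_\varepsilon$ is open: if $x\notin A_\varepsilon$ and $y\in B(x,\varepsilon)$, then $y\in A_\varepsilon$ would let us extend a chain to $x$, contradicting $x\notin A_\varepsilon$. Hence $A_\varepsilon$ is clopen and nonempty, so by connectedness $A_\varepsilon=X$; as $a$ was arbitrary, $X$ is $\varepsilon$-chainable.

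Applying this to $\varepsilon=\varepsilon_0$, the hypotheses of Lemma \ref{nadlerthm} are satisfied: $(X,d)$ is complete and $\varepsilon_0$-chainable, and $T\colon X\to\mathcal{CB}(X)$ is an $(\varepsilon_0,c)$-uniform local multivalued contraction. Nadler's theorem then produces a point $x\in X$ with $x\in Tx$, which is the required fixed point.

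I do not expect a serious obstacle here; the only nontrivial step is the passage from connectedness to $\varepsilon$-chainability, and that follows from the standard clopen-set argument outlined above. Everything else is a direct quotation of the two cited results.
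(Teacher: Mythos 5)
Your proposal is correct and follows essentially the same route as the paper: obtain the $(\varepsilon,c)$-uniform local multivalued contraction property from the preceding theorem, note that a connected metric space is $\varepsilon$-chainable for every $\varepsilon>0$, and conclude via Nadler's theorem (Lemma \ref{nadlerthm}). The only difference is that you spell out the standard clopen-set argument for $\varepsilon$-chainability, which the paper simply asserts as a known fact.
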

%%%%%%%%%%%%%%%%%%%%%%%%%%%%%%%%%%%%%%%%%
Our aim is to replace the metric transform $\phi$ in Theorem \ref{kirkandshahzadthm3.4} by a metric-preserving function. We obtain the following theorem.
\begin{thm}\label{genkirkandshahzadthm3.4}
Let $(X,d)$ be a metric space and $T:X\to\mathcal{CB}(X)$. Suppose there exists a metric-preserving function $f$ and $k\in (0,1)$ such that the following conditions hold:
\begin{itemize}
\item[(a)] For each $x, y\in X$, $f(H(Tx,Ty))\leq kd(x,y)$.
\item[(b)] $f'(0) > k$.
\end{itemize}
Then for $\varepsilon > 0$ sufficiently small, $T$ is an $(\varepsilon,c)$-uniform local multivalued contraction on $(X,d)$.
\end{thm}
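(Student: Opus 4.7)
The plan is to mimic the proof of Theorem \ref{thm2.1}, replacing the continuity argument for $g$ with a direct estimate on $H(Tx,Ty)$ coming from condition (a) together with Lemma \ref{lemma10}. First I would set
$$
c = \frac{1}{2}\left(\frac{k}{f'(0)}+1\right),
$$
with the convention $k/f'(0) = 0$ when $f'(0) = +\infty$, so that $k/c < f'(0)$ and $c \in (0,1)$. Since $f'(0) = \lim_{y\to 0^+} f(y)/y$, this choice yields $\delta_1 > 0$ such that $f(y)/y > k/c$ for every $y \in (0,\delta_1)$.

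The next step is the analogue of Lemma \ref{lemma2.2}: I need an $\varepsilon > 0$ such that $d(x,y) < \varepsilon$ forces $H(Tx,Ty) < \delta_1$, so that the bound $f(y)/y > k/c$ is available at the point $y = H(Tx,Ty)$. For this I would apply Lemma \ref{lemma10} to the metric-preserving function $f$: if $H(Tx,Ty) \geq \delta_1/2$, then taking $a = \delta_1$ and $b = H(Tx,Ty)$ in Lemma \ref{lemma10} gives $f(\delta_1) \leq 2 f(H(Tx,Ty))$. Since $f$ is amenable (Lemma \ref{lemma3}(i)), $f(\delta_1) > 0$; choose
$$
\varepsilon = \frac{f(\delta_1)}{2k}.
$$
Then for $d(x,y) < \varepsilon$, condition (a) gives $f(H(Tx,Ty)) \leq k d(x,y) < f(\delta_1)/2$, and the contrapositive of the previous observation forces $H(Tx,Ty) < \delta_1/2 < \delta_1$.

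With these two ingredients in hand, the remainder is a direct algebraic manipulation. Let $d(x,y) < \varepsilon$. If $H(Tx,Ty) = 0$ the inequality $H(Tx,Ty) \leq c\, d(x,y)$ is automatic (using amenability of $f$ when $d(x,y) = 0$ as well). Otherwise $0 < H(Tx,Ty) < \delta_1$, and combining the lower estimate $f(H(Tx,Ty)) > (k/c)\, H(Tx,Ty)$ with the upper estimate from condition (a) yields
$$
\frac{k}{c}\, H(Tx,Ty) \;<\; f(H(Tx,Ty)) \;\leq\; k\, d(x,y),
$$
hence $H(Tx,Ty) < c\, d(x,y)$, showing that $T$ is an $(\varepsilon,c)$-uniform local multivalued contraction.

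The only real obstacle is the middle step, where one must convert a bound on $f(H(Tx,Ty))$ into a bound on $H(Tx,Ty)$ itself; the metric-preserving property — via Lemma \ref{lemma10} — is precisely what makes this conversion possible, playing the same role that continuity of $g$ played in Lemma \ref{lemma2.2}. Everything else is a clean transcription of the proof of Theorem \ref{thm2.1} to the set-valued setting.
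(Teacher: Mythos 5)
Your proposal is correct and follows essentially the same route as the paper: both define $c = \frac{1}{2}\left(\frac{k}{f'(0)}+1\right)$, use Lemma \ref{lemma10} together with amenability to force $H(Tx,Ty) < \delta_1/2$ when $d(x,y)$ is small, and then combine the lower bound $f(z)/z > k/c$ with condition (a) to conclude. The only difference is cosmetic: you pick the explicit threshold $\varepsilon = f(\delta_1)/(2k)$, whereas the paper takes any $\varepsilon < \delta_1/2$ and checks $f(\delta_1)/2 > k\varepsilon$; both yield the conclusion for all sufficiently small $\varepsilon$.
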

%%%%%%%%%%%%%%%%%%%%%%%%%%%%%%%%%%%%%%%55
\begin{cor}\label{corgenkirkandshahzadthm3.4}
With the same assumptions in Theorem \ref{genkirkandshahzadthm3.4} except that condition (b) is replaced by (b$'$): there exists $c\in (0,1)$ such that for $t>0$ sufficiently small, $kt\leq f(ct)$. Then for $\varepsilon>0$ sufficiently small, $T$ is an $(\varepsilon,c)$-uniform local multivalued contraction on $(X,d)$.
\end{cor}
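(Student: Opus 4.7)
The plan is to give a direct argument that delivers the specific contraction constant $c$ from hypothesis (b$'$), rather than reducing to Theorem \ref{genkirkandshahzadthm3.4} (which, via the argument of Theorem \ref{thm2.5}, would only yield $f'(0) \geq k/c > k$, and hence some possibly different contraction constant built from $f'(0)$ that need not equal the $c$ in (b$'$)). The core idea is to combine (a) with (b$'$) applied at $t = H(Tx,Ty)/c$: this would give $k\,H(Tx,Ty)/c \leq f(H(Tx,Ty))$, and pairing with (a) then yields $k\,H(Tx,Ty)/c \leq kd(x,y)$, so dividing by $k$ produces the desired $H(Tx,Ty) \leq c\,d(x,y)$.

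The main obstacle is that (b$'$) only holds for $t > 0$ sufficiently small, so it can be applied at $t = H(Tx,Ty)/c$ only once $H(Tx,Ty)$ itself is known to be small. Concretely, I would fix $t_0 > 0$ for which $kt \leq f(ct)$ holds throughout $(0, t_0]$, and then choose $\varepsilon$ small enough that $d(x,y) < \varepsilon$ forces $H(Tx,Ty) \leq c t_0$. To secure such a uniform bound I would invoke Lemma \ref{lemma10}. Since $f$ is metric-preserving, Lemma \ref{lemma3}(i) gives amenability and hence $f(c t_0) > 0$; applying Lemma \ref{lemma10} with $a = c t_0$ and $b = H(Tx,Ty)$ shows that $H(Tx,Ty) \geq c t_0 / 2$ would force $f(H(Tx,Ty)) \geq f(c t_0)/2$. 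Setting $\varepsilon := f(c t_0)/(2k)$ and using (a) to write $f(H(Tx,Ty)) \leq k d(x,y) < k\varepsilon = f(c t_0)/2$ then rules this out and yields $H(Tx,Ty) < c t_0 / 2$ automatically, which in particular places $H(Tx,Ty)/c$ inside the range where (b$'$) applies.

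Once this smallness is in hand, the rest is routine: the case $H(Tx,Ty) = 0$ is trivial, and in the case $H(Tx,Ty) > 0$ one applies (b$'$) at $t = H(Tx,Ty)/c \in (0, t_0)$, chains with (a), and cancels $k$ as sketched above. This proves that for the $\varepsilon$ chosen above and every pair $x, y$ with $d(x,y) < \varepsilon$, one has $H(Tx,Ty) \leq c\,d(x,y)$, which is exactly the statement that $T$ is an $(\varepsilon, c)$-uniform local multivalued contraction. The only real work is the uniform smallness guarantee for $H(Tx,Ty)$ in the second paragraph; Lemma \ref{lemma10} is precisely the tool that supplies it.
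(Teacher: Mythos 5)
Your argument is correct, but it follows a genuinely different route from the paper's. The paper proves this corollary by pure reduction: it imitates the proof of Theorem \ref{thm2.5} (substituting $y=ct$ in the limit defining $f'(0)$) to conclude $f'(0)\geq k/c>k$, and then invokes Theorem \ref{genkirkandshahzadthm3.4} verbatim. You instead rerun the mechanics of that theorem's proof directly with the constant $c$ supplied by (b$'$): Lemma \ref{lemma10} together with amenability gives $f(ct_0)>0$ and forces $H(Tx,Ty)<ct_0/2$ whenever $d(x,y)<\varepsilon:=f(ct_0)/(2k)$, which legitimizes applying (b$'$) at $t=H(Tx,Ty)/c$ and chaining with (a) to get $H(Tx,Ty)\leq c\,d(x,y)$. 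Your extra work buys something concrete: the reduction in the paper only yields an $(\varepsilon,c')$-uniform local multivalued contraction for the constant $c'=\frac{1}{2}\left(\frac{k}{f'(0)}+1\right)$ manufactured inside the proof of Theorem \ref{genkirkandshahzadthm3.4}, and this $c'$ can exceed the $c$ of (b$'$) (for instance, if $f'(0)=k/c$ then $c'=(c+1)/2>c$), so strictly speaking the paper establishes the conclusion for some constant in $(0,1)$ rather than for the stated $c$. That discrepancy is harmless for the fixed point application in Theorem \ref{thmaftercorgenkirkandshahzadthm3.4}, but your direct argument delivers the conclusion exactly as written, with the same $c$ that appears in (b$'$), and all the lemmas you cite (Lemma \ref{lemma3}(i) for amenability, Lemma \ref{lemma10} for the uniform smallness of $H(Tx,Ty)$) are used correctly.
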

%%%%%%%%%%%%%%%%%%%%%%%%%%%%%%%%%%%%%%%%%%55
\begin{thm}\label{thmaftercorgenkirkandshahzadthm3.4}
If, in addition to the assumptions of Theorem \ref{genkirkandshahzadthm3.4} or Corollary \ref{corgenkirkandshahzadthm3.4}, $X$ is complete and $\varepsilon$-chainable, then $T$ has a fixed point. In particular, if $X$ is complete and connected, then $T$ has a fixed point.
\end{thm}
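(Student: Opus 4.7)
The plan is to derive this theorem as a direct combination of results already in hand. From either Theorem \ref{genkirkandshahzadthm3.4} or Corollary \ref{corgenkirkandshahzadthm3.4}, there exists some threshold $\varepsilon_0>0$ (depending on $f$, $k$, and the constant $c$) such that for every $0<\varepsilon\leq\varepsilon_0$, the map $T$ is an $(\varepsilon,c)$-uniform local multivalued contraction. Granted that $X$ is complete and $\varepsilon$-chainable for such an $\varepsilon$, Nadler's theorem (Lemma \ref{nadlerthm}) applies verbatim and produces a fixed point of $T$. So the first step of the proof is simply to chain these two invocations together and note that the hypotheses of Nadler's lemma are met.

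For the ``in particular'' statement, the extra work is to promote connectedness to $\varepsilon$-chainability. I would fix any $\varepsilon>0$ and any basepoint $x\in X$, and consider the set $A$ of points that can be reached from $x$ by a finite $\varepsilon$-chain. The claim is that $A$ is both open and closed: it is open because if $y\in A$ and $d(y,z)<\varepsilon$, then appending $z$ to an $\varepsilon$-chain ending at $y$ exhibits $z\in A$; it is closed because if $y\in\overline{A}$, then some point $y'\in A$ lies within distance $\varepsilon$ of $y$, and appending $y$ to an $\varepsilon$-chain ending at $y'$ shows $y\in A$. Since $x\in A$, connectedness of $X$ forces $A=X$, so $X$ is $\varepsilon$-chainable for every $\varepsilon>0$. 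Choosing $\varepsilon\in(0,\varepsilon_0]$ then reduces this case to the main statement.

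Honestly, there is no serious obstacle here: the arithmetic was already absorbed into Theorem \ref{genkirkandshahzadthm3.4} and Corollary \ref{corgenkirkandshahzadthm3.4}, and the clopen trick for passing from connectedness to $\varepsilon$-chainability is standard. The only thing that requires a moment's care is making sure the $\varepsilon$ appearing in the $\varepsilon$-chainability hypothesis is the same $\varepsilon$ for which $T$ is known to be an $(\varepsilon,c)$-uniform local multivalued contraction, which is why I would state the main argument for all sufficiently small $\varepsilon$ and only at the end specialize to the value furnished by Nadler's lemma.
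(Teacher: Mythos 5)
Your proposal is correct and follows essentially the same route as the paper: combine Theorem \ref{genkirkandshahzadthm3.4} (or Corollary \ref{corgenkirkandshahzadthm3.4}) with Nadler's result (Lemma \ref{nadlerthm}), and reduce the connected case to the $\varepsilon$-chainable case. The only difference is that you spell out the standard clopen argument showing a connected metric space is $\varepsilon$-chainable for every $\varepsilon>0$, which the paper simply invokes as a known fact.
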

%%%%%%%%%%%%%%%%%%%%%%%%%%%%%%%%%%%%%%%%
The proof of these results are similar to those in Section 3.\vspace{0.3cm}\\
\noindent \textbf{Proof of Theorem \ref{genkirkandshahzadthm3.4}}\vspace{0.2cm}\\
We define $c = \frac{1}{2}\left(\frac{k}{f'(0)}+1\right)$ as in the proof of Theorem \ref{thm2.1}. Then $0 \leq \frac{k}{f'(0)} < c < 1$ and there exists $\delta_1>0$ such that for every $z\in [0,\infty)$
\begin{equation}\label{eq1genkirkandshahzadthm3.4}
0 < z\leq \delta_1 \Rightarrow \frac{f(z)}{z} > \frac{k}{c}.
\end{equation}
To show that $T$ is an $(\varepsilon,c)$-uniform local multivalued contraction for $\varepsilon>0$ sufficiently small, we let $0<\varepsilon<\frac{\delta_1}{2}$ and let $x, y\in X$ be such that $d(x,y)<\varepsilon$. By Lemma \ref{lemma10} and (\ref{eq1genkirkandshahzadthm3.4}), we have for every $b\in [0,\infty)$
\begin{equation}\label{eq2genkirkandshahzadthm3.4}
b\geq\frac{\delta_1}{2} \Rightarrow f(b)\geq \frac{f(\delta_1)}{2} > \frac{k\delta_1}{2c} > \frac{k\varepsilon}{c} > k\varepsilon. 
\end{equation}
By condition (a), we havve $f(H(Tx,Ty))\leq kd(x,y) < k\varepsilon$. Therefore we obtain by (\ref{eq2genkirkandshahzadthm3.4}) that 
\begin{equation}\label{eq3genkirkandshahzadthm3.4}
H(Tx,Ty) < \frac{\delta_1}{2}.
\end{equation}
If $d(x,y)=0$ or $H(Tx,Ty) = 0$, then it is obvious  that $H(Tx,Ty) \leq cd(x,y)$ and we are done. So assume that $H(Tx,Ty) > 0$ and $d(x,y) > 0$. Then
$$
\frac{k}{c}\frac{H(Tx,Ty)}{d(x,y)} < \frac{f(H(Tx,Ty))}{H(Tx,Ty)}\cdot\frac{H(Tx,Ty)}{d(x,y)} = \frac{f(H(Tx,Ty))}{d(x,y)} \leq k,
$$ 
where the first inequality is obtained by applying (\ref{eq3genkirkandshahzadthm3.4}) and (\ref{eq1genkirkandshahzadthm3.4}) and the last inequality is merely the condition (a). This implies $H(Tx,Ty) \leq cd(x,y)$, as desired.\qed\vspace{0.3cm}\\
%%%%%%%%%%%%%%%%%%5
%%%%%%%%%%%%%%%%%%55
\noindent\textbf{Proof of Corollary \ref{corgenkirkandshahzadthm3.4}}\vspace{0.2cm}\\
We can imitate the proof of Theorem \ref{thm2.5} to obtain $f'(0)>k$. So Corollary \ref{corgenkirkandshahzadthm3.4} follows immediately from Theorem \ref{genkirkandshahzadthm3.4}.\qed\vspace{0.3cm}\\
%%%%%%%%%%%%%%%%%%%%%%%%5
%%%%%%%%%%%%%%%%%%%%%%%%%%%%
%%%%%%%%%%%%%%%%%%%%%%%%%%%%%%%%%%%%%%%%%%%%%%%%%5
\noindent\textbf{Proof of Theorem \ref{thmaftercorgenkirkandshahzadthm3.4}}\vspace{0.2cm}\\
This follows from Theorem \ref{genkirkandshahzadthm3.4}, Corollary \ref{corgenkirkandshahzadthm3.4}, and Lemma \ref{nadlerthm}. The other part follows from the fact that a connected metric space is $\varepsilon$-chainable for every $\varepsilon>0$.\qed\vspace{0.3cm}\\
%%%%%%%%%%%%%%%%%%%%%%%%%%%%%%%%%%%%%%%%%%%%%%%%%
\noindent \textbf{Conclusion:} We replace the metric transform $\phi$ by a metric-preserving function. Therefore we obtain theorems more general than those of Kirk and Shahzad \cite[Theorem 2.2, Theorem 2.3, Theorem 2.8, Theorem 3.4, and Theorem 3.6]{KS}.
\section{Fixed point set of metric transforms and metric-preserving functions}
Recall that for a function $f:X\to X$, we denote by $\fix f$ the set of all fixed points of $f$. We begin this section with the following lemma.
\begin{lem}\label{fpmtlemma3.1}
Let $f:[0,\infty)\to [0,\infty)$ be a metric transform. If $0<a<b$, $f(a) = a$, and $f(b) = b$, then $[a,b]\subseteq \fix f$.
\end{lem}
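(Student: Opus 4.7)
My plan is to prove that every point $c\in[a,b]$ is fixed by $f$ using the monotonicity property of $x\mapsto f(x)/x$ established in Lemma \ref{amencavegotdecreas}. The endpoints $c=a$ and $c=b$ are fixed by hypothesis, so the work is only for $c\in(a,b)$.

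First, I would note that a metric transform is amenable: since $f(0)=0$ and $f$ is strictly increasing, $f(x)>0$ for $x>0$, so $f^{-1}(\{0\})=\{0\}$. Combined with concavity, Lemma \ref{amencavegotdecreas} applies, so the function $x\mapsto f(x)/x$ is (weakly) decreasing on $(0,\infty)$.

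Now for $c\in(a,b)$, I would apply the decreasing property twice. From $0<a<c$, I get
$$
\frac{f(c)}{c}\leq \frac{f(a)}{a}=\frac{a}{a}=1,
$$
so $f(c)\leq c$. From $c<b$, I get
$$
1=\frac{b}{b}=\frac{f(b)}{b}\leq \frac{f(c)}{c},
$$
so $f(c)\geq c$. Together these give $f(c)=c$, hence $[a,b]\subseteq\fix f$.

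There is essentially no obstacle here beyond invoking the correct lemma; the proof is a direct application of Lemma \ref{amencavegotdecreas}. Alternatively, one could obtain $f(c)\geq c$ directly from concavity by writing $c=(1-\lambda)a+\lambda b$ with $\lambda=(c-a)/(b-a)$ and computing $f(c)\geq (1-\lambda)f(a)+\lambda f(b)=(1-\lambda)a+\lambda b=c$, but using the decreasing ratio for both inequalities is more uniform and avoids having to split into two separate arguments.
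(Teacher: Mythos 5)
Your proof is correct and is essentially identical to the paper's: both invoke Lemma \ref{amencavegotdecreas} to get that $x\mapsto f(x)/x$ is decreasing and then squeeze $f(c)/c$ between $f(a)/a=1$ and $f(b)/b=1$. No further comment is needed.
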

\begin{proof}
Since $f$ is amenable and concave, the function $x \mapsto \frac{f(x)}{x}$ is decreasing on $(0,\infty)$ by Lemma \ref{amencavegotdecreas}. So if $a\leq x \leq b$, then $1 = \frac{f(a)}{a} \geq \frac{f(x)}{x} \geq \frac{f(b)}{b} = 1$, which implies $f(x) = x$. This shows that $[a,b]\subseteq \fix f$. 
\end{proof}
%%%%%%%%%%%%%%%%%%%%%%%%%%%%%%%%%%%55
\begin{lem}\label{fpmtthm3.2}
If $f:[0,\infty)\to [0,\infty)$ is a metric transform, then $\fix f$ is a closed subset of $[0,\infty)$.
\end{lem}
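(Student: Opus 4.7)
The plan is to exploit two facts: first, that $0\in\fix f$ automatically, and second, that any concave function on $[0,\infty)$ is continuous on the interior $(0,\infty)$. With these in hand the closedness of $\fix f$ follows from a one-line continuity argument, leaving only the boundary point $0$ as a potentially delicate case — which turns out to be no case at all.

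More concretely, I would first record the preliminary observations: (i) by the definition of a metric transform, $f(0)=0$, so $0\in\fix f$; and (ii) since $f$ is concave on $[0,\infty)$, it is continuous on the open half-line $(0,\infty)$ (this is the standard theorem that a concave function on an interval is continuous on the interior). Then I would take an arbitrary sequence $(x_n)\subseteq\fix f$ with $x_n\to x\in[0,\infty)$ and show $x\in\fix f$. If $x>0$, continuity of $f$ at $x$ combined with the fact that $f(x_n)=x_n$ for every $n$ gives
\[
f(x)=\lim_{n\to\infty}f(x_n)=\lim_{n\to\infty}x_n=x,
\]
so $x\in\fix f$. If $x=0$, observation (i) immediately yields $x\in\fix f$.

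The only place one might worry is possible discontinuity of $f$ at the endpoint $0$, since concavity by itself does not force continuity at a boundary point; but because $0$ is a fixed point \emph{a priori}, the case $x=0$ never requires an appeal to continuity at $0$. Lemma \ref{fpmtlemma3.1} is not strictly needed for this argument, although it provides an alternative route: it shows that $\fix f\cap(0,\infty)$ is an interval, and one could then check closedness of this interval in $(0,\infty)$ by the same continuity argument applied at its (finite) endpoints, before adjoining $0$ to conclude.
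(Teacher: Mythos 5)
Your proof is correct, and it takes a genuinely different route from the paper's. You invoke the standard theorem that a concave function on an interval is continuous on the interior of its domain, which reduces the whole lemma to a two-case check: for a limit point $x>0$ continuity gives $f(x)=\lim f(x_n)=\lim x_n=x$, and the limit point $x=0$ is a fixed point for free since $f(0)=0$. You are also right to flag, and correctly dispose of, the one genuine subtlety: concavity does not force continuity at the boundary point $0$, so $\fix f\cap(0,\infty)$ is only guaranteed closed in $(0,\infty)$, and one must separately note that $0\in\fix f$ to get closedness in $[0,\infty)$. The paper instead avoids the continuity theorem entirely: it extracts a monotone subsequence, uses Lemma \ref{fpmtlemma3.1} to show the fixed points fill out intervals, and then squeezes $f(a)=a$ at the limit from two inequalities --- $a\le f(a)$ from the monotonicity of $f$ along the increasing subsequence, and $f(a)/a\le f(a_1)/a_1=1$ from Lemma \ref{amencavegotdecreas}. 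What your approach buys is brevity and transparency, at the cost of importing an external (though standard) fact about concave functions; what the paper's approach buys is self-containment, since every ingredient is a lemma already proved in the article, and it incidentally reuses machinery (the decreasing ratio $f(x)/x$) that the paper needs elsewhere anyway.
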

\begin{proof}
Let $(a_n)$ be a sequence in $\fix f$ and $a_n\to a$. If $a = 0$ or $a = a_n$ for some $n\in \mathbb N$, then $a\in \fix f$ and we are done. So assume that $a > 0$ and $a \neq a_n$ for any $n\in \mathbb N$. Since $a>0$ and $a_n\to a$, $a_n>0$ for all large $n$. By passing to the subsequence, we can assume that $a_n>0$ for every $n\in \mathbb N$. It is well-known that every sequence of real numbers has a monotone subsequence (see e.g. \cite[p.\ 62]{TBB}). By passing to the subsequence again, we can assume that $(a_n)$ is monotone. Now suppose that $(a_n)$ is increasing. Then by Lemma \ref{fpmtlemma3.1}, 
$$
[a_1,a_n] \subseteq [a_1,a_2]\cup [a_2,a_3]\cup\cdots \cup [a_{n-1},a_n] \subseteq \fix f\quad\text{for every $n\in \mathbb N$}.
$$
Since $(a_n)$ is increasing and $a_n\to a$, if $a_1\leq x < a$, then there exists $N\in \mathbb N$ such that $a_1\leq x < a_N$, which implies that $x\in \fix f$, by Lemma \ref{fpmtlemma3.1}. This shows that $[a_1,a)\subseteq \fix f$. Since $f$ is increasing and $a_n<a$, $a_n = f(a_n) \leq f(a)$ for every $n\in \mathbb N$. Since $a_n \leq f(a)$ for every $n\in \mathbb N$ and $a_n\to a$, we have 
\begin{equation}\label{eq1fpmtlemma3.1}
a\leq f(a).
\end{equation}
In addition, we obtain by Lemma \ref{amencavegotdecreas} and the fact that $a\geq a_1$ that
 \begin{equation}\label{eq2fpmtlemma3.1}
\frac{f(a)}{a} \leq \frac{f(a_1)}{a_1} = 1.
\end{equation}
From (\ref{eq1fpmtlemma3.1}) and (\ref{eq2fpmtlemma3.1}), we obtain $f(a) = a$, as required. The case where $(a_n)$ is decreasing can be proved similarly. This completes the proof.
\end{proof}
%%%%%%%%%%%%%%%%%%%%%%%%%%%%%%%%%%%%%%%%%5
\begin{lem}\label{lemma28.5}
Let $f:[0,\infty)\to[0,\infty)$ be a metric transform. Then $\fix f = [0,\infty)$ if and only if $\sup\fix f = +\infty$.
\end{lem}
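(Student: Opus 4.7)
The forward implication is immediate, since $\fix f = [0, \infty)$ directly forces $\sup \fix f = +\infty$. All the content lies in the converse.

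For the converse, I plan to assume $\sup \fix f = +\infty$ and prove that every $x \in [0, \infty)$ is a fixed point of $f$. The case $x = 0$ is handled by the definition of a metric transform, which gives $f(0) = 0$. For $x > 0$, the key idea is to use concavity twice to squeeze $f(x)$ between $x$ and $x$.

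Concretely, since $\sup \fix f = +\infty$, one can select two fixed points $b, b' \in \fix f$ with $x \leq b < b'$. If $x = b$ we are done; otherwise $0 < x < b < b'$. Writing $x$ as the convex combination $x = (x/b) \cdot b + (1 - x/b) \cdot 0$, concavity together with $f(0) = 0$ and $f(b) = b$ yields $f(x) \geq x$. Writing $b$ as the convex combination $b = \lambda x + (1-\lambda) b'$ with $\lambda = (b'-b)/(b'-x) \in (0,1)$, concavity together with $f(b) = b$ and $f(b') = b'$ rearranges, after a short algebraic manipulation, to $f(x) \leq x$. Combining the two inequalities gives $f(x) = x$, so $x \in \fix f$.

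The only calculation to execute is the algebraic simplification producing $f(x) \leq x$; it is routine but must be done carefully. The main conceptual point is that a single fixed point $b > x$ only yields $f(x) \geq x$ via the first concavity step, so we genuinely need a \emph{second} fixed point $b' > b$ to pin down the upper bound, and this is exactly where the hypothesis $\sup \fix f = +\infty$ (as opposed to merely $\fix f \neq \{0\}$) enters. An alternative route would use Lemma \ref{fpmtlemma3.1} together with the closedness of $\fix f$ given by Lemma \ref{fpmtthm3.2} to first conclude $[\alpha, \infty) \subseteq \fix f$ for $\alpha = \inf(\fix f \setminus \{0\})$, and then show $\alpha = 0$ by the same squeeze argument applied to an arbitrary $x \in (0, \alpha)$; but handling every $x > 0$ uniformly as above seems cleaner.
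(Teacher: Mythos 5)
Your proposal is correct and follows essentially the same route as the paper: both pick two fixed points $b < b'$ above $x$, use concavity with $f(0)=0$ (the content of Lemma \ref{amencavegotdecreas}) to get $f(x)\geq x$, and use concavity at the intermediate fixed point written as a convex combination of $x$ and $b'$ to get the reverse bound; the paper merely packages this as a proof by contradiction while you argue directly. The algebra in your second concavity step checks out.
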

\begin{proof}
It is enough to show that $\sup\fix f = +\infty$ implies $(0,\infty)\subseteq \fix f$. So suppose that $\sup\fix f = +\infty$ but there exists $x\in (0,\infty)$ such that $f(x)\neq x$. Since $\sup\fix f = +\infty$, there exists $a>x$ such that $f(a) = a$. Similarly, there exists $b>a$ such that $f(b) = b$. Since $f$ is amenable and concave, we obtain by Lemma \ref{amencavegotdecreas} that 
$$
\frac{f(x)}{x} \geq \frac{f(a)}{a} = 1.
$$
Since $f(x)\neq x$, $f(x) > x$. Since $x<a<b$, there exists $t\in (0,1)$ such that $a = (1-t)x+tb$. By the concavity of $f$, we obtain
$$
a = f(a) = f\left((1-t)x+tb\right) \geq (1-t)f(x)+tf(b) > (1-t)x+tb = a,
$$
a contradiction. This completes the proof.
\end{proof}
%%%%%%%%%%%%%%%%%%%%%%%%%%%%%%%%%%%%%%%%%%%%
\begin{thm}\label{chapter5thm38}
If $a>0$, then each set of the form $\{0\}$, $\{0,a\}$, $[0,a]$, and $[0,\infty)$ is a fixed point set of a metric transform. Conversely, if $f$ is a metric transform, then $\fix f = \{0\}, \{0,a\}, [0,a]$, or $[0,\infty)$ for some $a\in (0,\infty)$.
\end{thm}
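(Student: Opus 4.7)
The plan is to prove the two directions separately. For the forward direction I will exhibit an explicit metric transform realizing each of the four sets. For the converse I will classify $\fix f$ by combining the fact that $\fix f$ is closed (Lemma \ref{fpmtthm3.2}) with Lemma \ref{fpmtlemma3.1}, Lemma \ref{lemma28.5}, and a three-point concavity trick.

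For the forward direction, I would take $f(x) = x/2$ to realize $\fix f = \{0\}$; $f(x) = \sqrt{ax}$, which is amenable, strictly increasing, and concave on $[0,\infty)$, to realize $\fix f = \{0, a\}$; the piecewise linear map equal to $x$ on $[0,a]$ and to $(x+a)/2$ on $[a,\infty)$, which is concave because its slopes decrease from $1$ to $1/2$, to realize $\fix f = [0,a]$; and the identity to realize $\fix f = [0,\infty)$. Each verification of the metric-transform axioms and of the claimed fixed point set is routine.

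For the converse, let $S = \fix f$. Then $S$ is closed by Lemma \ref{fpmtthm3.2} and $0 \in S$. If $S = \{0\}$ there is nothing to prove. If $\sup S = +\infty$, Lemma \ref{lemma28.5} gives $S = [0,\infty)$. Otherwise let $b = \sup S$; then $b > 0$ and $b \in S$ by closedness. If $S \cap (0,\infty) = \{b\}$ then $S = \{0,b\}$. The only remaining case is that $S$ contains two positive fixed points $0 < a < b$, and the goal is to show $S = [0,b]$.

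This final case is the main obstacle, since one must rule out the false fifth possibility $\fix f = \{0\} \cup [a,b]$. Lemma \ref{fpmtlemma3.1} already gives $[a,b] \subseteq S$, so it suffices to prove $f(x) = x$ for $x \in (0,a)$. Concavity with $f(0) = 0$ and $f(a) = a$ yields $f(x) \geq x$ at once. For the reverse inequality I would exploit concavity again, but applied to the three points $x < a < b$: writing $a$ as the convex combination $(1-s)x + sb$ with $s = (a-x)/(b-x) \in (0,1)$, concavity gives
$$
a = f(a) \geq (1-s) f(x) + s f(b) = (1-s) f(x) + sb,
$$
and a short algebraic simplification (using $a - sb = (1-s)x$) produces $f(x) \leq x$. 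Hence $f(x) = x$ on $[0,a]$, and combined with $[a,b] \subseteq S$ and $b = \sup S$ we conclude $S = [0,b]$. This three-point concavity step is what prevents a gap between $0$ and the smallest positive fixed point, and is the real content of the classification.
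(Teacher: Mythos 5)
Your proposal is correct and follows essentially the same route as the paper: the same four explicit metric transforms for the forward direction, and for the converse the same case split via Lemmas \ref{fpmtthm3.2}, \ref{lemma28.5}, and \ref{fpmtlemma3.1}, with the same three-point concavity inequality at $x<a<b$ to force $f(x)\leq x$ below the smallest known positive fixed point (the paper phrases this last step as a contradiction rather than a direct two-sided estimate, but the inequality used is identical).
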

\begin{proof}
Define $f_1, f_2, f_3, f_4 :[0,\infty)\to [0,\infty)$ by 
$$
f_1(x) = \frac{x}{2},\quad f_2(x) = \sqrt{ax},\quad f_3(x) = x,\quad f_4(x) = \begin{cases}
x,\quad &x\in[0,a];\\
\frac{x+a}{2},\quad& x>a.
\end{cases}
$$ 
It is easy to verify that the functions $f_1, f_2, f_3, f_4$ are metric transforms and $\fix f_1 = \{0\}$, $\fix f_2 = \{0,a\}$, $\fix f_3 = [0,\infty)$, and $\fix f_4 = [0,a]$. This proves the first part. \\
\indent Next let $f$ be a metric transform such that $\fix f\neq \{0\}$ and $\fix f \neq [0,\infty)$. We let $a = \sup\fix f$ and assert that $\fix f = \{0, a\}$ or $[0,a]$. Note that since $\fix f\neq \{0\}$, $a>0$. It is obtained by Lemma \ref{lemma28.5} that $a<+\infty$. Now apply Lemma \ref{fpmtthm3.2} to get $a\in \fix f$. Therefore $\{0,a\}\subseteq \fix f$. By the definition of $a$, we see that $x\notin \fix f$ for every $x>a$. Now if $x\notin \fix f$ for every $0<x<a$, then $\fix f = \{0,a\}$ and we are done. So assume that there exists $0<x<a$ such that $x\in \fix f$. We will show that $\fix f = [0,a]$. Since $a  = \sup \fix f$, it is obvious that $\fix f\subseteq [0,a]$. Suppose for a contradiction that there exists $0<y<a$ such that $f(y)\neq y$. Since $0<x<a$ and $x, a\in \fix f$, we obtain by Lemma \ref{fpmtlemma3.1} that $y\notin [x,a]$. So $y<x$. By Lemma \ref{amencavegotdecreas} we have 
$$
\frac{f(y)}{y} \geq \frac{f(x)}{x} = 1.
$$
Since $f(y)\neq y$, $f(y)>y$. Since $y<x<a$, there exists $t\in (0,1)$ such that $x = (1-t)y+ta$. By the concavity of $f$, we obtain 
$$
x = f(x) = f((1-t)y+ta) \geq (1-t)f(y)+tf(a) > (1-t)y+ta = x,
$$
a contradiction. This completes the proof.
\end{proof} 
%%%%%%%%%%%%%%%%%%%%%%%%%%%%%%%5
Since every metric transform is metric-preserving, we immediately obtain that each set of the form $\{0\}$, $\{0,a\}$, $[0,a]$, and $[0,\infty)$ is a fixed point set of a metric-preserving function. However, there is a metric-preserving function $f$ where $\fix f$ is not of this form. Let us show this more precisely.
\begin{cor}
If $a>0$, then each set of the form $\{0\}$, $\{0,a\}$, $[0,a]$, and $[0,\infty)$ is a fixed point of a metric-preserving function.
\end{cor}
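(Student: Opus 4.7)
The corollary is an immediate consequence of what was already established in Theorem \ref{chapter5thm38} together with Proposition \ref{prop2.3}. My plan is simply to reuse the four explicit metric transforms constructed in the proof of Theorem \ref{chapter5thm38} and observe that they are already metric-preserving.

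More precisely, I would recall the functions $f_1(x)=x/2$, $f_2(x)=\sqrt{ax}$, $f_3(x)=x$, and
$$
f_4(x)=\begin{cases} x, & x\in[0,a],\\ \tfrac{x+a}{2}, & x>a,\end{cases}
$$
from the proof of Theorem \ref{chapter5thm38}. There it was shown that these are metric transforms with fixed point sets $\{0\}$, $\{0,a\}$, $[0,\infty)$, and $[0,a]$ respectively. Then I would invoke Proposition \ref{prop2.3}, which states that every metric transform is metric-preserving. Applying this proposition to $f_1,f_2,f_3,f_4$ produces the four desired realizations of $\{0\}$, $\{0,a\}$, $[0,a]$, and $[0,\infty)$ as fixed point sets of metric-preserving functions.

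There is no real obstacle here, since all of the work has already been done in Theorem \ref{chapter5thm38}. The only thing to emphasize is that the class of metric-preserving functions contains the class of metric transforms, so any fixed-point set realized by a metric transform is automatically realized by a metric-preserving function. The remark in the excerpt already hints that the metric-preserving case is strictly richer than the metric transform case (as discussed after the corollary), but proving that strict enlargement is not part of this corollary itself, so I would leave that discussion to the subsequent material.
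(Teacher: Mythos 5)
Your proposal is correct and matches the paper's own argument, which likewise derives the corollary immediately from Theorem \ref{chapter5thm38} together with Proposition \ref{prop2.3}. Recalling the explicit functions $f_1,\dots,f_4$ is a harmless elaboration of the same idea.
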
 
\begin{proof}
This follows immediately from Theorem \ref{chapter5thm38} and Proposition \ref{prop2.3}.
\end{proof}
%%%%%%%%%%%%%%%%%%%%%%%%%%%%%%%%%55
\begin{exam}\label{addmoreexam40}
Let $f, g, h:[0,\infty)\to[0,\infty)$ be given by
\begin{align*}
f(x) &= \left\lceil x\right\rceil,\quad\quad g(x) =
 \begin{cases}
0,\quad &x=0;\\
1,\quad &x\in \mathbb Q-\{0\};\\
\sqrt 2,\quad &x\in \mathbb Q^c,
\end{cases} \\
h(x) &= \begin{cases}
0,\quad &x=0;\\
1,\quad &0<x<1;\\
x,\quad &x\in\mathbb Q\cap [1,2];\\
2,\quad &x\in \left(\mathbb Q^c\cap [1,2]\right) \cup (2,\infty). 
\end{cases}
\end{align*}
(Recall that $\left\lceil x\right\rceil$ is the smallest integer which is larger or equal to $x$) It is easy to verify that $f$ is amenable, increasing, and subadditive. So by Lemma \ref{chapter2lemma9}, $f$ is metric-preserving. Since $g$ and $h$ are amenable and tightly bounded, we obtain by Lemma \ref{lemma8} that $g$ and $h$ are metric-preserving. It is easy to see that $\fix f = \mathbb N\cup \{0\}$, $\fix g = \{0, 1, \sqrt 2\}$, and $\fix h = \{0\}\cup (\mathbb Q\cap [1,2])$.
\end{exam}
By generating a function similar to $h$ we obtain a more general result as follows:
\begin{prop}\label{addmoreprop41}
Let $A \subseteq [u,2u]$ for some $u>0$. Then $A\cup\{0\}$ is a fixed point set of a metric-preserving function. 
\end{prop}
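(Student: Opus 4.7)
The plan is to mimic the construction of the function $h$ in Example \ref{addmoreexam40} and exploit Lemma \ref{lemma8}: any amenable, tightly bounded function is metric-preserving. The strategy is therefore to build an explicit $f$ whose fixed point set is exactly $A\cup\{0\}$ and whose range on $(0,\infty)$ lies inside $[u,2u]$.

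First I would dispose of the trivial case $A=\emptyset$, where the target set is $\{0\}$: the constant function $f(x)=u$ for $x>0$ with $f(0)=0$ is amenable and tightly bounded, hence metric-preserving, and $\fix f=\{0\}$. So I may assume $A\neq\emptyset$ and fix any $c\in A$. Then I would define
\[
f(x) = \begin{cases} 0, & x=0, \\ x, & x\in A, \\ c, & x\in(0,\infty)\setminus A. \end{cases}
\]
The key reason this works is that $c$ is chosen \emph{inside} $A$, so the "default value" $c$ is itself one of the desired fixed points and no spurious fixed points are created outside $A$.

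Next I would verify three routine things. First, $\fix f = A\cup\{0\}$: clearly $0$ and every element of $A$ are fixed, while for $x\in(0,\infty)\setminus A$ one has $f(x)=c\in A$, whereas $x\notin A$, so $f(x)\neq x$. Second, $f$ is amenable: for $x>0$, $f(x)\in A\cup\{c\}\subseteq[u,2u]$, so $f(x)\geq u>0$, giving $f^{-1}(\{0\})=\{0\}$. Third, $f$ is tightly bounded: the same observation shows $f(x)\in[u,2u]$ for every $x>0$. Lemma \ref{lemma8} then immediately yields that $f$ is metric-preserving, completing the proof.

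There is really no hard step here; this is a clean generalization of the construction of $h$ in Example \ref{addmoreexam40}. The only subtlety to flag is that the constant value used for $x\notin A$ must be chosen from inside $A$ itself; otherwise the third clause of the definition would accidentally create a fixed point not belonging to $A$. That is why the case $A=\emptyset$ has to be treated separately with an unrelated construction.
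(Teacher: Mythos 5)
Your main construction (for $A\neq\emptyset$) is correct and is essentially the paper's approach: both proofs define $f$ to be the identity on $A$ and a constant on $(0,\infty)\setminus A$, then invoke Lemma \ref{lemma8} (amenable $+$ tightly bounded $\Rightarrow$ metric-preserving). The paper uses the constant $u$ and patches the one point where this could create a spurious fixed point (setting $f(u)=2u$ when $u\notin A$), while you avoid the patch by choosing the constant $c$ inside $A$; your version is marginally cleaner there.

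However, your treatment of the case $A=\emptyset$ contains exactly the error you warn against. The function $f(0)=0$, $f(x)=u$ for $x>0$ satisfies $f(u)=u$, so $u\in\fix f$ and $\fix f=\{0,u\}\neq\{0\}$. A constant positive value $v$ on $(0,\infty)$ always fixes the point $v$ itself, so no such "constant" construction can yield $\fix f=\{0\}$. The fix is easy: either use the paper's uniform construction (value $u$ off $\{0,u\}$ and $f(u)=2u$, which has no positive fixed point), or simply observe that $\{0\}$ is already known to be the fixed point set of the metric transform $x\mapsto x/2$ from Theorem \ref{chapter5thm38}, hence of a metric-preserving function by Proposition \ref{prop2.3}. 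As written, though, the $A=\emptyset$ case of your proof is wrong.
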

\begin{proof}
We define $f:[0,\infty)\to[0,\infty)$ by
$$
f(x) = \begin{cases}
0,\quad &\text{if $x=0$};\\
x,\quad &\text{if $x\in A$};\\
u,\quad &\text{if $x\notin A \wedge x\notin \{0,u\}$},
\end{cases}
$$
and if $u\notin A$, then define $f(u) = 2u$. Then $f$ is amenable and tightly bounded. Therefore, by Lemma \ref{lemma8}, $f$ is metric-preserving. It is easy to see that $\fix f = A\cup\{0\}$. This completes the proof.
\end{proof}
%%%%%%%%%%%%%%%%%%%%%%%%%%%%%%%%%%%%%%%%%%%%
From Example \ref{addmoreexam40} and Proposition \ref{addmoreprop41}, we see that the fixed point set of a metric-preserving function may not be of the form $\{0\}$, $\{0,a\}$, $[0,a]$, and $[0,\infty)$. Other natural questions and answers are the following:
\begin{itemize}
\item[Q1:] Is there a metric-preserving function which does not satisfy the result in Lemma \ref{fpmtlemma3.1}?
\item[A1:] Every function given in Example \ref{addmoreexam40} is such a function.
\item[Q2:] Is there a metric-preserving function which does not satisfy the result in Lemma \ref{fpmtthm3.2}?
\item[A2:] The function $h$ given in Example \ref{addmoreexam40} and the function $f$ given in Proposition \ref{addmoreprop41} (with a suitable set $A$) are such functions.
\item[Q3:] Is there a metric-preserving function which does not satisfy the result in Lemma \ref{lemma28.5}?
\item[A3:] The function $f$ given in Example \ref{addmoreexam40} is such a function.
\end{itemize} 
We see that the fixed point sets of metric-preserving functions are quite difficult to be completely characterized. We leave this to the interested reader. Now we end this article by giving continuous metric-preserving functions which do not satisfy the results in Lemma \ref{fpmtlemma3.1} and Lemma \ref{lemma28.5}.
\begin{exam}
Let $f, g:[0,\infty)\to[0,\infty)$ be given by $f(x) = \left\lfloor x\right\rfloor+\sqrt{x-\left\lfloor x\right\rfloor}$ and $g(x) = x+|\sin x|$. (Recall that $\left\lfloor x\right\rfloor$ is the largest integer which is less than or equal to $x$). We will use Lemma \ref{Pokornylemma} to show that $f$ and $g$ are metric-preserving. First, the function $x\mapsto |\sin x|$ is periodic with period $\pi$.
$$
|\sin(x+y)| = |\sin x\cos y+\cos x\sin y| \leq |\sin x|+|\sin y|.
$$
So the function $x\mapsto |\sin x|$ is also subadditive. From this, we easily see that $g$ satisfies the condition in Lemma \ref{Pokornylemma}. So $g$ is metric-preserving. It is not difficult to verify that $f$ is also satisfies the assumption in Lemma \ref{Pokornylemma} and we will leave the details to the reader. It is also easy to see that $\fix f = \mathbb N\cup \{0\}$ and $\fix g = \{n\pi\mid n\in \mathbb N\cup\{0\}\}$. So $f$ and $g$ are continuous metric preserving functions of which fixed point sets do not satisfy the results in Lemma \ref{fpmtlemma3.1} and Lemma \ref{lemma28.5}. 
\end{exam} 
%%%%%%%%%%%
%%%%%%%%%%%
%%%%%%%%%%%
%%%%%%%%%%%
%%%%%%%%%%%%5
\noindent \textbf{Competing Interests}\\[0.2cm]
The authors declare that they have no competing interests.\\[0.3cm]
\noindent \textbf{Authors' contributions}\\[0.2cm]
All authors contributed significantly in writing this paper. All authors read and approved this final manuscript.\\[0.3cm]
\noindent \textbf{Author details}\\[0.2cm]
$^a$Department of Mathematics, Faculty of Science, Silpakorn University, Nakhon Pathom, 73000, Thailand, $^b$Department of Mathematics and Computer Science, Faculty of Science, Chulalongkorn University, Bangkok, 10330, Thailand.\\[0.3cm]
\noindent\textbf{Acknowledgment} 
The first author would like to thank The Thailand Research Fund for financial support under the contract number TRG5680052. Both authors also would like to thank Professor I. A. Rus for sending them his article on the role of metric-preserving functions in fixed point theory which helps improve the presentation of this article.
%%%%%%%%%%%%%%%%%%%%%%%%%%%%%%%%%%%%%%%%%%%%%%%55

\end{document}